\DeclareSymbolFont{AMSb}{U}{msb}{m}{n}
\documentclass[11pt,a4paper,leqno,]{amsart}
\usepackage[utf8]{inputenc}
\usepackage[english]{babel}
\usepackage{amsmath, amsthm, amstext,amssymb,mathrsfs}
\usepackage{microtype}

\usepackage{indentfirst}
\usepackage[colorlinks,bookmarks]{hyperref}
\usepackage{braket}
\usepackage{caption}
\usepackage{stmaryrd}
\usepackage{comment}
\usepackage{multirow}
\usepackage{booktabs}
\usepackage[all]{xy}
\usepackage{graphicx}

\setcounter{tocdepth}{1}
\usepackage{listings}

\newtheorem{cor}{Corollary}[section]
\newtheorem*{cor*}{Corollary}
\newtheorem{lem}[cor]{Lemma}
\newtheorem*{lem*}{Lemma}
\newtheorem{thm}[cor]{Theorem}
\newtheorem*{thm*}{Theorem}

\newtheorem*{conj*}{Conjecture}
\newtheorem{prop}[cor]{Proposition}
\newtheorem*{prop*}{Proposition}

\newcommand{\bt}{\begin{thm}}
\newcommand{\et}{\end{thm}}
\newcommand{\bp}{\begin{prop}}
\newcommand{\ep}{\end{prop}}
\newcommand{\bd}{\begin{defn}}
\newcommand{\ed}{\end{defn}}
\newcommand{\bl}{\begin{lem}}
\newcommand{\el}{\end{lem}}
\newcommand{\bfa}{\begin{fact}}
\newcommand{\efa}{\end{fact}}
\newcommand{\bc}{\begin{cor}}
\newcommand{\ec}{\end{cor}}
\newcommand{\bex}{\begin{example}}
\newcommand{\eex}{\end{example}}
\newcommand{\br}{\begin{remark}}
\newcommand{\er}{\end{remark}}
\newcommand{\ben}{\begin{enumerate}}
\newcommand{\een}{\end{enumerate}}

\theoremstyle{definition}
\newtheorem{defn}[cor]{Definition}
\newtheorem{rmk}[cor]{Remark}
\newtheorem{exa}[cor]{Example}

\newcommand{\bC}{\mathbb{C}}

\newcommand{\bN}{\mathbb{N}}
\newcommand{\bP}{\mathbb{P}}
\newcommand{\bQ}{\mathbb{Q}}

\newcommand{\bZ}{\mathbb{Z}}

\newcommand{\cF}{\mathcal{F}}

\newcommand{\cN}{\mathcal{N}}

\newcommand{\cT}{\mathcal{T}}

\newcommand{\cO}{\mathcal{O}}

\newcommand{\ringS}{H^0_* (\cO_S)}
\newcommand{\ringC}{H^0_* (\cO_C)}

\newcommand{\sF}{\mathscr{F}}

\newcommand{\sI}{\mathscr{I}}

\renewcommand{\dim}{\operatorname{dim}}

\renewcommand{\min}{\operatorname{min}}

\newcommand{\Hom}{\operatorname{Hom}}
\newcommand{\Ann}{\operatorname{Ann}}

\newcommand{\ds}{\displaystyle}

\title{Reconstructing curves from their Hodge classes}

\author{Maria Gioia Cifani, Gian Pietro Pirola and Enrico Schlesinger}
\address[M.G.C.]{Dipartimento di matematica e fisica, Università di Roma 3, Largo San Leonardo Murialdo, 1 – 00146 Roma}
\email{mariagioia.cifani@uniroma3.it}
\address[G.P.P.]{Department of Mathematics 'F. Casorati', University of Pavia, via Ferrata 5, 27100 Pavia, Italy}
\email{gianpietro.pirola@unipv.it}
\address[E.S.]{Dipartimento di Matematica 'F. Brioschi', Politecnico di Milano, Piazza Leonardo da Vinci 32, 20133 Milano, Italy}
\email{enrico.schlesinger@polimi.it}

\begin{document}
\begin{abstract}
Let $S$ be a smooth algebraic surface in $\mathbb{P}^3(\mathbb{C})$. A curve $C$ in $S$ has a cohomology class $\eta_C \in H^1 \hspace{-3pt}\left( \Omega^1_S \right)$. Define $\alpha(C)$ to be the equivalence class of $\eta_C$
in the quotient of $H^1 \hspace{-3pt}\left( \Omega^1_S \right)$ modulo the subspace generated by the class $\eta_H$ of a plane section of
$S$.
In the paper "Reconstructing subvarieties from their periods" the authors Movasati and Sert\"{o}z pose several interesting questions
about the reconstruction of $C$ from the annihilator $I_{\alpha(C)}$ of $\alpha(C)$ in the polynomial ring $R=H^0_*(\cO_{\bP^3})$. It contains the homogeneous ideal of $C$, but is much larger as $R/I_{\alpha(C)}$ is artinian. We give sharp numerical conditions that guarantee $C$ is reconstructed by forms of low degree in $I_{\alpha(C)}$. We also show it is not always the case that the class $\alpha(C)$ is \textit{perfect}, that is, that $I_{\alpha(C)}$ could be bigger than the sum of the Jacobian ideal of $S$ and of the homogeneous ideals of curves $D$ in $S$ for which $I_{\alpha(D)}=I_{\alpha(C)}$.
\end{abstract}
\maketitle

\section{Introduction}
 The Hodge conjecture, one of the most challenging and interesting open questions in algebraic geometry,
can be regarded as a reconstruction problem.
Even when the Hodge conjecture is known, as for curves on surfaces,
there are a series of somewhat related problems that might shed a new light on some aspects of the cycle map.

A good example is given by  \cite[Theorem 4.b.26]{GriffithsHarris} where
it is proven that, given a smooth surface $S \subset \bP^3$ and an integral class $\gamma$ in $H^1 \hspace{-3pt}\left(  \Omega^1_S \right)$ with the same numerical properties as the fundamental class of a curve $C \subset S$, then $\gamma$ is itself the fundamental class of an effective divisor $D \subset S$ provided $\deg(S)$ is large relative to
the self-intersection of $\gamma$ and to $\deg(C)$.

In a similar vein, very interesting recent work by Movasati and Sert\"{o}z \cite{MovasatiSertoz} concerns the reconstruction of subvarieties of $\bP^N$ from their periods.
Our purpose is to give an answer, in the special case of curves lying on a smooth algebraic surface $S$ in complex projective space, to two questions raised in \cite{MovasatiSertoz} that we now illustrate.
A curve $C$ in $S$ has a fundamental cohomology class $\eta_C \in H^1\hspace{-3pt}\left( \Omega^1_S \right)$. We denote by $\alpha(C)$ the equivalence class of $\eta_C$
in the quotient of $H^1\hspace{-3pt}\left( \Omega^1_S \right)$ modulo the subspace generated by the class $\eta_H$ of a plane section of
$S$: the class $\alpha(C)$ depends on the embedding of $S$ in $\mathbb{P}^3$, and can be seen as a linear form on the primitive cohomology
$H^1\hspace{-3pt}\left( \Omega^1_S \right)^{\perp_H}$.
Following \cite{MovasatiSertoz} we focus our analysis on the {\em annihilator} $I_{\alpha(C)}$ of $\alpha(C)$ in the polynomial ring $R=H^0_*(\cO_{\bP^3})$. Note - see Proposition \ref{vanishing} - that $I_{\alpha(C)}=I_{\alpha(D)}$ for two curves $C$ and $D$ in $S$ if and only if
$mC+nD+pH$ is linearly equivalent to zero for some choice of integers $m$, $n$ and $p$
with $m$ and $n$ non zero and relatively prime.
Thus the annihilator $I_{\alpha(C)}$, which contains the homogeneous ideal $I_C$ of $C$ and
the Jacobian ideal $J_S$ of $S$, in general it is much larger than $I_C+J_S$,
as it contains the ideal $I_D$ for any curve $D$ for which there is a relation
$mC+nD+pH \sim 0$ as above. Still, one can ask whether $C$ can be reconstructed from
$I_{\alpha(C)}$ when $\deg(S)$ is large with respect to the degree or other invariants of $C$, and Movasati and Sert\"{o}z in \cite{MovasatiSertoz} investigate, in a more general context than ours, the following questions:
\begin{enumerate}
  \item under which conditions $I_{\alpha(C)}$ reconstructs $C$, in the sense that
forms of low degree in  $I_{\alpha(C)}$ cut out the curve $C$ scheme-theoretically?
To be precise, we will say that $C$ is {\em reconstructed at level $m$ by $I_{\alpha(C)}$} if its homogeneous ideal $I_C$ is generated over $R$ by $I_{\alpha(C),\leq m}$ - that is, by forms of degree $\leq m$ in
$I_{\alpha(C)}$.

  \item as the example of complete intersection suggests, they define a class $\alpha \in H^1\hspace{-3pt}\left( \Omega^1_S \right)/{\mathbb{C} \,\eta_H}$
  to be \emph{perfect at level $m$} if there exist effective divisors $D_1,\ldots,D_q$
in $S$ such that $I_{\alpha(D_i)}=I_{\alpha}$ for every
$i=1,\ldots q$ and
$$
I_{\alpha,j} = \sum_{i=1}^q I_{{D_i},j}+J_{S,j} \quad \mbox{for every $j \leq m$}
$$
where $J_{S}$ denotes the Jacobian ideal of $S$. The question is under which conditions  the class $\alpha(C)$ is perfect at level $m$, and whether all classes
$\alpha(C)$ are perfect at every level $m$.
\end{enumerate}

In this paper we prove two theorems that give partial answers to these questions.
Our first theorem extends known results on complete intersections \cite{Dan,MovasatiSertoz}
to arithmetically Cohen-Macaulay curves (ACM curves for short). The  tools we need for this are provided by a very nice paper by Ellingsrud and Peskine \cite{EP} which unfortunately seems to be little known.
In \cite{EP} the authors were interested in the study of the Noether-Lefschetz locus, and the invariant
$\alpha(C)$ plays a prominent role in their work because it vanishes if and only if the curve is a complete intersection
of $S$ and another surface. Their paper connects the class $\alpha(C)$ to the normal sequence arising from the inclusions $C \subset S \subset \bP^3$ and gives an effective tool for computing its annihilator $I_{\alpha(C)}$ - see Lemma \ref{propannihilators}. To state our first theorem,
given a curve $C$ in $\bP^3$, we let $s(C)$ be the minimum degree of a surface containing $C$, and $e(C)$ the index of speciality of $C$, that is, the maximum $n$ such that
$\cO_C(n)$ is special, that is, $h^1(\cO_C(n)))>0$.

\bt \label{maintheorem}
Suppose $C$ is an ACM curve on the smooth surface $S \subseteq \bP^3 (\bC)$. Let $s$ denote the degree of $S$.
Then
\begin{enumerate}
  \item if  $s\geq 2e(C)+8-s(C)$, the curve $C$  is reconstructed  at level $e(C)+3$ by
 $I_{\alpha(C)}$;
  \item the class $\alpha(C)$ is perfect at level $m$ for every $m$.
\end{enumerate}
\et

For example, let $C$ be a twisted cubic curve: $C$ is then ACM with invariants $s(C)=2$ and $e(C)=-1$. By Theorem \ref{maintheorem}, if $S$ is a quartic surface containing $C$, then $C$ is cut out scheme-theoretically by the quadrics
whose equations lie in $I_{\alpha(C,S)}$. This was suggested  and verified for thousands of randomly chosen quartic surfaces
containing $C$ in \cite[Sections 2.3 and 3.2]{MovasatiSertoz}. 

Our second theorem provides a first example of a non-perfect algebraic class $\alpha(C)$, giving a negative answer to Question 2.12 in \cite{MovasatiSertoz}.

\bt \label{intro-rationalquintic}
Let $C \subset \bP^3$ be a smooth rational curve of degree $4$ contained in a
smooth surface $S$ of degree $s=4$. The class $\alpha(C)$ in $S$ is not
perfect at level $3$.
\et

It would be very interesting to determine conditions for a class $\alpha(C)$ to be perfect,
and we don't know whether ACM curves form the largest set of curves $C$ whose
classes $\alpha(C)$, in any smooth surface $S$ containing $C$, are perfect.

Finally, we note that Movasati and Sert\"{o}z pose their questions of reconstruction and perfectness in a more general context, namely for classes in $H^n(\Omega_X^n)$ of varieties of dimension $n$ in smooth hypersurfaces $X$ in $\bP^{2n+1}$. An interesting and challenging problem is trying to answer those question for every $n$, generalizing as far as it is possible the results of this paper to higher dimension.
\medskip

The paper is structured as follows. In section \ref{2} we collect some well known facts we need,
and, for the benefit of the reader, we recall in some detail the constructions from \cite{EP} we will need in the sequel of the paper. In section \ref{3} we prove Proposition \ref{condizioni} - a numerical criterion that guarantees, when the degree of $S$ is large to respect to that of $C$, that the curve $C$ is reconstructed  at a certain level $m$ by
 $I_{\alpha(C)}$. As an example we prove in Corollary \ref{rationalcurves} a reconstruction result for a general rational curve of degree $d$.  In section \ref{4} we prove
Theorem \ref{maintheorem}, which is split in Theorems \ref{acmcurves} and \ref{acmperfette}. In section \ref{sec:rational} we prove Theorem \ref{intro-rationalquintic}.

\thanks{The authors are members of GNSAGA of INdAM.
The authors were partially supported by national MIUR funds,
PRIN  2017 Moduli and Lie theory,  and by MIUR: Dipartimenti di Eccellenza Program
   (2018-2022) - Dept. of Math. Univ. of Pavia.
    }
\section{Preliminaries}\label{2}
We work in the projective space $\bP^3$ over the field $\bC$ of complex numbers.
Given a coherent sheaf $\sF$ on $\bP^3$ and $i \in \bN$, we define
$$
H^i_* (\cF)= \bigoplus_{n \in \bN} H^i \hspace{-3pt} \left(\bP^3, \, \cF(n) \right).
$$
These are graded module over the polynomial ring
$$
R=H^0_* (\cO_{\bP^3}) \cong \bC[x,y,z,w].
$$
Given a subscheme $X$ of $\bP^3$, we will denote by $\sI_X$ its sheaf of ideals, and by $I_X = H^0_* (\sI_X)$
its saturated homogeneous ideal in $R$. We will write $I_{X,n}$ to denote its $n^{th}$ graded piece
$H^0(\bP^3,\sI_X(n))$.

If  $M=\bigoplus_{n\in \mathbb{Z}} M_n$ a graded $R$-module,
the {\em graded $\bC$-dual} module $M^*$ of $M$ is defined by setting $(M^*)_m =\Hom_{\bC}(M_{-m}, \bC)$
with multiplication $R_n \times  (M^*)_m \to (M^*)_{m+n}$ defined by
$$g\lambda (v) = \lambda(gv),\ \ \forall g \in R_n, \lambda \in (M^*)_m, v \in M_{-m-n}.$$

\noindent
By Serre's duality, if $X \subseteq \bP^N$ is an equidimensional Cohen-Macaulay subscheme of dimension $d$, then for any locally free sheaf
$\cF$ on $X$ there is an isomorphism of graded $R$-modules
$$
\left(H^i_* (X,\cF)\right)^* \cong H^{d-i}_* (X,\cF^\vee \otimes \omega_X)
$$

Let $S$ be a {\em smooth algebraic surface} of degree $s$ in $\bP^3$,
and $C \subset S$ a {\em curve}, that is,
an effective Cartier divisor in $S$.
The curve $C$ has a cohomology class $\eta_C \in H^1\hspace{-3pt}\left(S, \Omega^1_S \right)$.
It can be defined as follows: the curve $C$ defines a linear form $\lambda_C$ on the set of $(1,1)$ forms by integration; abstractly
one can define this linear form as the image of the trace map $ H^1\hspace{-3pt}\left(C, \Omega^1_C \right) \rightarrow \bC$ under the transpose
of the morphism  $H^1\hspace{-3pt}\left(S, \Omega^1_S \right) \rightarrow H^1\hspace{-3pt}\left(C, \Omega^1_C \right)$ obtained by restricting differentials
on $S$ to $C$ \cite[Chapter III Ex. 7.4]{Hartshorne}. The cohomology class $\eta_C$ is the image of $\lambda_C$
under the Serre's duality isomorphism $ H^1\hspace{-3pt}\left( \Omega^1_S \right)^* \cong H^1\hspace{-3pt}\left( \Omega^1_S \right)$.

If $\cO_S(C)$ denotes the invertible sheaf on $S$ corresponding to $C$, then $\eta_C=c(\cO_S(C))$ where $c$ denotes the first Chern class homomorphism
\begin{equation}\label{classMap}
c: \mbox{Pic} (S) \to H^1\hspace{-3pt}\left( S,\Omega^1_S \right).
\end{equation}

The perfect pairing $\langle \quad, \quad\rangle$ of Serre's duality
is compatible with the intersection product of divisor classes \cite[Chapter V Ex. 1.8]{Hartshorne} in the sense that for every pair of Cartier divisors $D$ and $E$ on $S$
$$
\langle c(\cO_S(D)), c(\cO_S(E)) \rangle = D\cdot E.
$$
Since $S$ is a surface in $\bP^3$, numerically equivalent divisors on $S$ are
linearly equivalent, and the first Chern class map $\mbox{Pic} (S) \to H^1\hspace{-3pt}\left( S,\Omega^1_S \right)$ is injective.

The cotangent bundles of $S$ and $\bP^3$ are related by the exact sequence
\begin{equation}\label{Cotangent}
     0 \to \cO_S(-s) \to \Omega^1_{\bP^3} \otimes \cO_S \to \Omega^1_S \to 0.
\end{equation}
It is well known (see e.g. \cite{EGPS}) that $H^1(\Omega^1_{\bP^3} \otimes \cO_S) \cong \bC$ and that its image
in $ H^1(\Omega^1_S)$ is the class $\eta_H$ of a plane section $H$ of $S$. We look at a portion of the long cohomology sequence arising from (\ref{Cotangent})
\begin{equation}\label{delta}
 H^1\hspace{-3pt}\left( \Omega^1_S \right) \stackrel{\delta}{\to} H^2\hspace{-3pt}\left( \cO_S(-s) \right)
 \stackrel{\epsilon}{\to}  H^2 \hspace{-3pt}\left(\Omega^1_{\bP^3} \otimes \cO_S \right)
\end{equation}
Dualizing and using Serre's duality we get an exact sequence
\begin{equation}\label{epsilon}
 H^0\hspace{-3pt}\left( \mathcal{T}_{\bP^3}(s-4) \right) \stackrel{\epsilon^*}{\to}
 H^0\hspace{-3pt}\left( \cO_S(2s-4) \right)
 \to \mbox{Im} (\delta)^*\to 0
\end{equation}
We denote by $J_S$ the Jacobian ideal of $S$, that is, the ideal of $R$ generated by the partial derivatives of an equation of $S$. Then the above discussion is summarized by Griffith's
theorem:  the primitive first cohomology group of $S$ is isomorphic to the $(2s-4)$-graded
piece of the Jacobian ring of $S$:
$$
H^1\hspace{-3pt}\left( \Omega^1_S \right)^{\perp_H} \cong \mbox{Im}(\delta)^* \cong \dfrac{H^0\hspace{-3pt}\left(\cO_{\bP^3}(2s-4)\right)}{J_{S,2s-4}}
$$



\begin{defn} \label{maindef}
Given a curve $C$ in $S$, we will denote by $\alpha(C)=\alpha(C,S,\bP^3)$ the image of its cohomology class $\eta_C$ under the map
$$
 H^1\hspace{-3pt}\left( \Omega^1_S \right) \stackrel{\delta}{\to} H^2\hspace{-3pt}\left( \cO_S(-s) \right)
\cong  H^0\hspace{-3pt}\left( \cO_S(2s-4) \right)^*
$$
Thus $\alpha(C)$ is a linear form on $H^0\hspace{-3pt}\left( \cO_S(2s-4) \right)$ that vanishes
on $J_{S,2s-4}$.

Given $\alpha \in H^0 (\cO_S(2s-4))^*$, we denote by $I_{\alpha}$ the annihilator
of $\alpha$ in the {\em polynomial ring $R$}: it is the homogeneous ideal in $R$ whose $n^{th}$ graded piece is
$$I_{\alpha,n}=\{f \in R_n\ |\ \alpha(fg)=0,\ \forall g \in H^{0} (\cO_S (2s-4-n))\ \}.$$
\end{defn}

\begin{rmk}
When writing the paper, we decided to take all ideals in the polynomial ring $R= H^0_* (\cO_{\bP^3})$: thus $J_S$ and $I_\alpha$ are for us ideals of $R$, and $J_S \subset I_{\alpha(C)}$. Our motivation is that we would like to compare $I_\alpha$ with the ideal of $C$ as a curve in $\bP^3$.
In \cite{MovasatiSertoz} the author's denote by $I_\alpha$ the annihilator of $\alpha$ in the Jacobian ring and by $\tilde{I_{\alpha}}$ its preimage in $R$.
\end{rmk}

Let $T=R/I_S=H^0_* (\cO_S)$. Then $\alpha \in (T_{2s-4})^*$, and
 the ideal $I_{\alpha}$ is determined by $\mbox{Ker} (\alpha) \subseteq T_{2s-4}$; conversely, one can recover
$\mbox{Ker} (\alpha)$ as the image of  $I_{\alpha,2s-4}$ via the quotient map $R_{2s-4} \rightarrow T_{2s-4}$.
The perfect pairing
$$R_n /I_{\alpha ,n}  \times \left(R_{2s-4-n} / I_{\alpha,2s-4-n} \right)^* \to \bC$$
shows
$A:= R/I_{\alpha} = \bigoplus_{n=0}^{2s-4} A_n$ is an artinian Gorenstein ring of socle $2s-4$ \cite[Prop 1.3]{EP}.


In \cite{EP} the authors were interested in the study of the Noether-Lefschetz locus, and the invariant
$\alpha(C)$ plays a prominent role in their work because it vanishes if and only if the curve is a complete intersection
of $S$ and another surface. More generally, a Lefschetz type theorem about the Picard group of $S$
(see \cite{SGA7II,Badescu, Voisin}, ) implies the following
fact:

\begin{prop} \label{vanishing}
Let $C$ and $D$ be effective divisors on a smooth surface $S \in\bP^3$, and let $H$ denote a plane section of $S$.
Then $I_{\alpha(C)} = I_{\alpha(D)}$ if and only if there exist $m,n,p \in \bZ$, $m,n \neq 0$ and relatively prime, such that
$mC+nD+pH$ is linearly equivalent to zero.
\end{prop}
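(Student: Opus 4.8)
The plan is to translate the condition $I_{\alpha(C)} = I_{\alpha(D)}$ into a statement about the classes $\alpha(C)$ and $\alpha(D)$ as linear forms on the Jacobian ring, and then into a statement about the Chern classes $\eta_C$, $\eta_D$ in primitive cohomology, where the Lefschetz theorem on $\Pic(S)$ does the real work. The key observation is that $A = R/I_\alpha$ is artinian Gorenstein with socle in degree $2s-4$, and that an ideal $I$ of this form is determined by its component $I_{2s-4} = \ker(R_{2s-4} \to T_{2s-4} \xrightarrow{\alpha} \bC)$: indeed $I_{\alpha,n}$ is cut out by the pairing with $T_{2s-4-n}$, so $I_{\alpha(C)} = I_{\alpha(D)}$ iff $\ker(\alpha(C)) = \ker(\alpha(D))$ in $T_{2s-4}$, iff $\alpha(D) = c\,\alpha(C)$ for some $c \in \bC^\times$.

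\textbf{Step 1.} Recall from Definition \ref{maindef} and the discussion preceding it that $\alpha(C)$ is the image of $\eta_C$ under the injection $\delta \colon H^1(\Omega^1_S)^{\perp_H} \hookrightarrow H^2(\cO_S(-s)) \cong H^0(\cO_S(2s-4))^*$, whose kernel on all of $H^1(\Omega^1_S)$ is exactly $\bC\,\eta_H$ by the exactness of \eqref{delta} together with $H^1(\Omega^1_{\bP^3}\otimes\cO_S) \cong \bC$ mapping onto $\bC\eta_H$. So $\alpha(D) = c\,\alpha(C)$ in $H^0(\cO_S(2s-4))^*$ if and only if $\delta(\eta_D - c\,\eta_C) = 0$, i.e. $\eta_D - c\,\eta_C \in \bC\,\eta_H$, i.e. $\eta_D = c\,\eta_C + c'\,\eta_H$ for some $c, c' \in \bC$.

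\textbf{Step 2.} Now use that the Chern class map $c\colon \Pic(S) \to H^1(\Omega^1_S)$ is injective (stated in the excerpt, since numerically equivalent divisors on a surface in $\bP^3$ are linearly equivalent) and that, by the Noether--Lefschetz/Grothendieck--Lefschetz theorem for the Picard group referenced in the excerpt, for $C$ and $D$ to give a relation $\eta_D = c\,\eta_C + c'\,\eta_H$ we must have $c, c' \in \bQ$: concretely, $\eta_C$, $\eta_D$, $\eta_H$ lie in the finitely generated group $c(\Pic S)$, and a $\bC$-linear relation among finitely many elements of a finitely generated free abelian group that holds in the ambient $\bC$-vector space forces the coefficients to be rational once we know $\eta_C, \eta_D, \eta_H$ span (over $\bQ$) a space in which the relation is the unique one — more carefully, since $\eta_H \neq 0$ and $\Pic(S)$ is torsion-free, the relation $c\,\eta_C + c'\,\eta_H = \eta_D$ with $\eta_C,\eta_D,\eta_H$ in a lattice forces $c,c' \in \bQ$ (if $\eta_C \notin \bQ\eta_H$; the degenerate case $\eta_C \in \bQ\eta_H$ means $C$ is rationally a multiple of $H$ and is handled directly). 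Clearing denominators, we get integers $m, n, p$ with $n \neq 0$ and $m \neq 0$ (the latter because $\eta_D \notin \bC\eta_H$ — otherwise $D$ would be a complete intersection with $S$ and $\alpha(D) = 0 = \alpha(C)$, again a direct case) such that $nc(\cO_S(D)) = mc(\cO_S(C)) + pc(\cO_S(H))$, hence by injectivity of $c$, $mC + nD + pH \sim 0$; dividing by $\gcd(m,n)$ makes them relatively prime, and $p$ remains an integer because $H$ is a primitive element in the relevant sense — or simply absorb the gcd and re-clear.

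\textbf{Step 3.} For the converse: if $mC + nD + pH \sim 0$ with $m, n \neq 0$ coprime, then $m\,\eta_C + n\,\eta_D + p\,\eta_H = 0$ in $H^1(\Omega^1_S)$, so $\eta_D = -\tfrac{m}{n}\eta_C - \tfrac{p}{n}\eta_H$, whence $\alpha(D) = -\tfrac{m}{n}\alpha(C)$ by linearity of $\delta$ and $\delta(\eta_H) = 0$; since $m \neq 0$ this is a nonzero scalar multiple, so $\ker\alpha(D) = \ker\alpha(C)$ and therefore $I_{\alpha(D)} = I_{\alpha(C)}$ by the Gorenstein duality pairing. \textbf{The main obstacle} I anticipate is Step 2 — cleanly extracting the rationality of the coefficients from the Lefschetz theorem and correctly bookkeeping the degenerate cases ($\eta_C$ or $\eta_D$ proportional to $\eta_H$, i.e. complete intersections) so that the final integers $m, n$ are genuinely nonzero and coprime; the cohomological bookkeeping in Steps 1 and 3 is routine given the exact sequences \eqref{Cotangent}, \eqref{delta} already set up in the preliminaries.
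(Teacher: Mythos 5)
Your proof is correct and follows essentially the same route as the paper: reduce $I_{\alpha(C)}=I_{\alpha(D)}$ to $\alpha(D)=c\,\alpha(C)$ via the Gorenstein/kernel description, lift to $\eta_D=c\,\eta_C+c'\eta_H$ using $\ker\delta=\bC\,\eta_H$, force $c,c'\in\bQ$ from the lattice structure of the Néron--Severi group (the paper does this by pairing with $\eta_C$ and $\eta_H$ under the intersection form, you by the $\bQ$-structure of $c(\Pic S)$ inside $H^2$ --- the same fact in two guises), and obtain coprimality from the torsion-freeness of $\Pic(S)/\bZ H$, which is exactly the ``primitivity of $H$'' you invoke and which the paper cites from Badescu. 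The only cosmetic difference is that you spell out the degenerate cases $\eta_C\in\bQ\,\eta_H$ explicitly, which the paper leaves implicit.
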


\begin{proof}
Suppose $mC+nD+pH$ is linearly equivalent to zero and $m$ and $n$ are nonzero.
The cotangent complex (\ref{Cotangent}) gives rise to an exact sequence in
cohomology
\begin{equation}\label{cohomseq}
H^1\hspace{-3pt}\left( \Omega^1_{\bP^3} \otimes \cO_S \right) \cong \bC \stackrel{\gamma}{\to}
H^1\hspace{-3pt}\left( \Omega^1_S \right) \stackrel{\delta}{\to} H^2\hspace{-3pt}\left( \cO_S(-s) \right) \simeq H^0\hspace{-3pt}\left( \cO_S(2s-4)\right)^*
\end{equation}
and one knows that $\gamma(1)=\eta_H$, so that the kernel of $\delta$ is the $\bC$-line spanned by $\eta_H$. From
$mC+nD+pH \sim 0$ we then deduce $m \alpha (C)=-n \alpha(D)$. Since $m$ and $n$ are nonzero, the linear forms  $\alpha (C)$ and
$\alpha(D)$ have the same kernel, hence $I_{\alpha(C)}  = I_{\alpha(D)} $.

In the other direction, suppose $I_{\alpha(C)}  = I_{\alpha(D)} $, that is,  $\alpha (C)$ and
$\alpha(D)$ have the same kernel. Then $\alpha (C)= c \, \alpha (D)$ for a nonzero {\em complex} number $c$. Using (\ref{cohomseq}) and the intersection pairing we deduce that
are integers $m$, $n$, $p$, with $m$ and $n$ nonzero,such that
$mC+nD+pH$ is linearly equivalent to zero. Finally, $m$ and $n$ can be taken relatively prime
because $Pic(S)/\bZ H$ has no torsion (see for example \cite[Theorem B]{Badescu}). In particular, when $D=0$, one can take $m=1$.
\end{proof}

As noted in \cite{EP} and \cite[Lemma 2.3]{MovasatiSertoz}, the ideal $I_{\alpha(C)}$ contains
the ideal of $C$ in $S$. This follows from the remark of \cite{EP} that $\alpha (C) \in H^0\hspace{-3pt}\left( \cO_S(2s-4)\right)^*$ is the pull-back of a linear form
$\beta(C) \in H^0\hspace{-3pt}\left( \cO_C(2s-4)\right)^*$. For the benefit of the reader and for later use, we give a proof of this fact. The linear form $\beta(C)$ arises from the normal bundles exact sequence:
\begin{equation}\label{Normal}
 0 \to  \cN_{C/S} \cong \omega_C(4-s) \to \cN_{C/\bP^3} \to  \cN_{S/\bP^3} \otimes \cO_C \cong  \cO_{C}(s) \to 0.
\end{equation}
Tensoring (\ref{Normal}) with $\cO_C(-s)$ and taking cohomology we obtain a map $H^0(\mathcal{O}_C) \to H^1(\omega_C(4-2s))$
and we let $$\beta(C) \in H^0\hspace{-3pt}\left( \cO_C(2s-4)\right)^* \cong H^1(\omega_C(4-2s))$$ denote the image
of $1 \in H^0(\mathcal{O}_C)$.

\begin{prop}\cite[Construction 1.8]{EP} \label{pullback}
The linear form $\alpha(C)$ is the pull-back of $\beta(C)$ to $S$, that is,
$\alpha(C)= \rho^*(\beta(C))$ where $\rho^*$ is the transpose of the natural map
$\rho: H^0\hspace{-3pt}\left( \cO_S(2s-4)\right) \rightarrow H^0\hspace{-3pt}\left( \cO_C(2s-4)\right)$.
\end{prop}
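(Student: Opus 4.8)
The plan is to identify both $\alpha(C)=\delta(\eta_C)$ and $\rho^*(\beta(C))$ as connecting homomorphisms with target $H^2(\cO_S(-s))$, and then to match them. First I would dualise $\rho$. The restriction map sits in the exact sequence on $S$
\begin{equation*}
0\to\cO_S(2s-4-C)\to\cO_S(2s-4)\stackrel{\rho}{\to}\cO_C(2s-4)\to 0;
\end{equation*}
applying $\mathcal{H}om_{\cO_S}(-,\omega_S)$, and using $\omega_S\cong\cO_S(s-4)$ together with $\mathcal{E}xt^1_{\cO_S}(\cO_C,\cO_S)\cong\cN_{C/S}$ (from the resolution $0\to\cO_S(-C)\to\cO_S\to\cO_C\to 0$ on the smooth surface $S$), one obtains the Serre-dual sequence
\begin{equation*}
0\to\cO_S(-s)\to\cO_S(C-s)\to\omega_C(4-2s)\to 0,
\end{equation*}
whose last term is $\cN_{C/S}(-s)$ and in which the first map becomes, after the twist $\otimes\cO_S(s)$, multiplication by a section cutting out $C$. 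Since Serre duality is a duality of $\delta$-functors, the transpose $\rho^*$, read through $H^0(\cO_C(2s-4))^*\cong H^1(\omega_C(4-2s))$ and $H^0(\cO_S(2s-4))^*\cong H^2(\cO_S(-s))$, is precisely the connecting homomorphism $\partial\colon H^1(\omega_C(4-2s))\to H^2(\cO_S(-s))$ of this dual sequence. As $\alpha(C)=\delta(\eta_C)$ by definition, it is enough to prove $\delta(\eta_C)=\partial(\beta(C))$ in $H^2(\cO_S(-s))$.

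Next I would compare the two connecting homomorphisms $\delta$ (from (\ref{Cotangent})) and $\partial$, both of which arise from short exact sequences whose sub-sheaf is $\cO_S(-s)\cong\cN^\vee_{S/\bP^3}$. The most transparent way is a \v{C}ech computation on an affine cover $\{U_i\}$ of $S$ on which $C$ is cut out by $f_i$, so that $\cO_S(C)$ has transition functions $g_{ij}=f_i/f_j$; letting $F$ be a global equation of $S$ and choosing ambient lifts $\widetilde f_i,\widetilde g_{ij}$, write $\widetilde g_{ik}=\widetilde g_{ij}\widetilde g_{jk}(1+Fh_{ijk})$ and $\widetilde f_i=\widetilde f_j\widetilde g_{ij}+Fc_{ij}$. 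On one hand $\eta_C=c(\cO_S(C))$ is represented by the $1$-cocycle $\{d\log g_{ij}\}$; lifting it to $\Omega^1_{\bP^3}\otimes\cO_S$ and taking the coboundary, then reading the outcome through $\cO_S(-s)\cong\cN^\vee_{S/\bP^3}\hookrightarrow\Omega^1_{\bP^3}\otimes\cO_S$ (the map determined by $dF$), one finds that $\delta(\eta_C)$ is represented by $\pm\{h_{ijk}|_S\}$. On the other hand $\beta(C)$ is the image of $1\in H^0(\cO_C)$ under the connecting map of $(\ref{Normal})\otimes\cO_C(-s)$, hence is computed from the gluing data of the conormal bundle of $C$ in $\bP^3$; tracing $1$ through, then lifting the resulting section of $\cN_{C/S}(-s)$ to $\cO_S(C-s)$ and taking the coboundary --- using the relation $f_ih_{ijk}|_S=(c_{jk}g_{ij}+c_{ij}-c_{ik})|_S$, which is forced by the cocycle identity among the $\widetilde f_i$ --- shows that $\partial(\beta(C))$ is represented by the same cocycle, up to a sign absorbed by the conventions built into the definitions of $\alpha(C)$, $\beta(C)$ and of Serre duality. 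This yields the desired equality.

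The main obstacle I anticipate is bookkeeping rather than ideas: tracking all the twists by $\cO(\pm s)$ and the standard identifications $\cN_{C/S}\cong\omega_C(4-s)$, $\cN_{S/\bP^3}\otimes\cO_C\cong\cO_C(s)$, $\omega_S\cong\cO_S(s-4)$, and fixing the overall sign. Structurally, what makes the computation work is that the two short exact sequences feeding $\delta$ and $\partial$ embed into a single commutative diagram assembled from the cotangent sequence of $S\subset\bP^3$ and the normal-bundle sequences of the flag $C\subset S\subset\bP^3$, so that $\delta(\eta_C)$ and $\partial(\beta(C))$ coincide by naturality of connecting homomorphisms; equivalently, the equality is the coincidence in $H^2(\cO_S(-s))=\operatorname{Ext}^2_{\cO_S}(\cO_S,\cO_S(-s))$ of two Yoneda $2$-extensions, one built from the Atiyah (principal-parts) extension of $\cO_S(C)$ and the other from $\cN_{C/\bP^3}$ pulled back along $\cO_S\twoheadrightarrow\cO_C$. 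This is the computation carried out in \cite[Construction 1.8]{EP}, reproduced here for the reader's benefit.
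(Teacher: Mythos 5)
Your proposal is correct in substance, but it proves the statement by a genuinely different route than the paper. The paper's proof is purely diagrammatic: using that $\Omega^1_S$ has rank two and determinant $\omega_S$, so $\cT_S(s-4)\cong\Omega^1_S$, it rewrites the twisted tangent sequence of $S\subset\bP^3$ as $0\to\Omega^1_S\to\cT_{\bP^3}\otimes\cO_S(s-4)\to\cO_S(2s-4)\to0$ and maps it by restriction onto the normal bundle sequence $0\to\omega_C\to\cN_{C/\bP^3}(s-4)\to\cO_C(2s-4)\to0$; taking cohomology of this single commutative diagram and dualizing identifies $\alpha(C)$ with $\rho^*(\beta(C))$ in one stroke, with no cocycles and no choices. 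Your argument instead factors $\rho^*$ through the connecting map of the Serre dual $0\to\cO_S(-s)\to\cO_S(C-s)\to\omega_C(4-2s)\to0$ of the ideal sequence of $C$ in $S$, and then matches $\delta(\eta_C)$ with $\partial(\beta(C))$ by an explicit \v{C}ech computation; the pivotal identity $f_i h_{ijk}|_S=(c_{jk}g_{ij}+c_{ij}-c_{ik})|_S$ is correct (it follows from substituting $\widetilde f_j=\widetilde f_k\widetilde g_{jk}+Fc_{jk}$ into $\widetilde f_i=\widetilde f_j\widetilde g_{ij}+Fc_{ij}$ and comparing with $\widetilde f_i=\widetilde f_k\widetilde g_{ik}+Fc_{ik}$), and both sides indeed come out as $\pm\{h_{ijk}|_S\}$. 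What your approach buys is an explicit Carlson--Griffiths-style cocycle representative for $\alpha(C)$, which is genuinely useful for computation; what it costs is the heavy bookkeeping of twists and an overall sign that must be fixed by convention (harmless here, since only $\ker\alpha(C)$, hence $I_{\alpha(C)}$, is used in the paper). Note that the "structural" remark at the end of your proof --- that the two sequences assemble into one commutative diagram built from the cotangent sequence of $S\subset\bP^3$ and the normal sequences of the flag $C\subset S\subset\bP^3$, so the equality follows from naturality of connecting homomorphisms --- is essentially the paper's actual proof; had you led with that diagram, the cocycle computation would have been unnecessary.
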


\begin{proof}
Observe
that $\Omega^1_S$ is a rank two vector bundle with determinant $\omega_S$, hence the tangent bundle
$\cT_S=(\Omega^1_S)^\vee $
is isomorphic to $\Omega^1_S \otimes \omega_S^{-1}= \Omega^1_S(4-s)$. The tangent complex of $S \subseteq \bP^3$ and the normal
bundle sequence (\ref{Normal}) give rise to a commutative diagram

\begin{equation*}
\xymatrix{
  0 \ar[r] & \Omega^1_S \cong \cT_S (s-4) \ar[d] \ar[r] & \cT_{\bP^3} \otimes \cO_S(s-4) \ar[d] \ar[r]
  & \cO_S(2s-4) \ar[d] \ar[r] &0
    \\
  0 \ar[r] &\omega_C  \ar[r] & \cN_{C/\bP^3} (s-4) \ar[r]
  & \cO_C(2s-4) \ar[r] &0
 }
\end{equation*}
Taking cohomology and dualizing one sees that $\alpha(C)$ is the pull back of $\beta(C)$ to $S$.
\end{proof}
The following Lemma in \cite{EP} gives an effective method  to compute $I_{\alpha}$ in many cases.
\begin{lem}{ \cite[Lemma 1.10]{EP}} \label{propannihilators}
Let $N(C)$ denote the image of the map
$
H^0_* \cN_{C/\bP^3} (-s) \to \ringC
$
arising from the normal bundle sequence (\ref{Normal}).
Let $\pi:R=H^0_* (\cO_{\bP^3}) \to \ringC$ be the natural map.
Then, for every integer $n$,
$$\pi^{-1}\left(N(C)_n\right) \subseteq I_{\alpha(C),n}$$
with  equality if $\pi_{2s-4-n}$ is surjective.
\end{lem}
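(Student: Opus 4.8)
The plan is to realize $N(C)$ as the kernel of a connecting homomorphism and then transport the defining condition of $I_{\alpha(C)}$ through Serre duality on $C$. Twisting the normal bundle sequence (\ref{Normal}) by $\cO_C(-s)$ gives a short exact sequence of $\cO_C$-modules
\[
0 \to \omega_C(4-2s) \to \cN_{C/\bP^3}(-s) \to \cO_C \to 0,
\]
whose long exact sequence in $H^0_*$ exhibits $N(C)$ as the kernel of the connecting homomorphism $\partial\colon \ringC \to M$, where $M := H^1_*(\omega_C(4-2s))$, so $M_n = H^1(\omega_C(n+4-2s))$. Because this is a sequence of $\cO_C$-modules, $\partial$ is a degree-zero homomorphism of graded $\ringC$-modules; writing $\beta(C) = \partial(1) \in M_0 = H^1(\omega_C(4-2s))$ for the class introduced before Proposition \ref{pullback}, naturality of the connecting homomorphism under multiplication by $\bar f$ gives $\partial(\bar f) = \bar f \cdot \beta(C)$ in $M_n$ for every $\bar f \in (\ringC)_n = H^0(\cO_C(n))$. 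Thus $N(C)_n = \{\bar f \in H^0(\cO_C(n)) : \bar f \cdot \beta(C) = 0\}$.

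Next I would reformulate membership in $I_{\alpha(C),n}$. Fix $f \in R_n$, put $\bar f = \pi(f) \in H^0(\cO_C(n))$, and for $g \in H^0(\cO_S(2s-4-n))$ let $\bar g \in H^0(\cO_C(2s-4-n))$ be its restriction to $C$. By Proposition \ref{pullback} we have $\alpha(C) = \rho^*\beta(C)$, so $\alpha(C)(fg) = \beta(C)(\bar f\, \bar g)$; under the Serre duality identification $H^0(\cO_C(2s-4))^* \cong H^1(\omega_C(4-2s))$ this value equals the trace in $H^1(\omega_C)$ of $\bar f\, \bar g \cdot \beta(C) = \bar g \cdot (\bar f \cdot \beta(C)) = \bar g \cdot \partial(\bar f)$, that is, the value of the Serre duality pairing $\langle \bar g, \partial(\bar f) \rangle$ between $H^0(\cO_C(2s-4-n))$ and $H^1(\omega_C(n+4-2s))$. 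Hence $f \in I_{\alpha(C),n}$ if and only if $\langle \bar g, \partial(\bar f) \rangle = 0$ for every $g \in H^0(\cO_S(2s-4-n))$.

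Both assertions now follow. If $f \in \pi^{-1}(N(C)_n)$ then $\partial(\bar f) = 0$, so $\langle \bar g, \partial(\bar f) \rangle = 0$ for all $g$ and $f \in I_{\alpha(C),n}$; this gives $\pi^{-1}(N(C)_n) \subseteq I_{\alpha(C),n}$ with no hypothesis. Conversely, assume $\pi_{2s-4-n}$ is surjective. Since it factors through $H^0(\cO_S(2s-4-n))$, the restriction $H^0(\cO_S(2s-4-n)) \to H^0(\cO_C(2s-4-n))$ is also surjective, so the sections $\bar g$ exhaust $H^0(\cO_C(2s-4-n))$; then $f \in I_{\alpha(C),n}$ says that $\partial(\bar f)$ annihilates all of $H^0(\cO_C(2s-4-n))$ under the pairing. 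Because that pairing is perfect --- here one uses that $C$, being an effective Cartier divisor on the smooth surface $S$, is Cohen--Macaulay of pure dimension one --- we get $\partial(\bar f) = 0$, i.e. $\bar f \in N(C)_n$ and $f \in \pi^{-1}(N(C)_n)$.

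The only genuinely delicate point is the identification in the second paragraph: one must check that multiplication by a section commutes with the connecting homomorphisms and is compatible with the Serre duality isomorphisms, so that $\alpha(C)(fg)$ is literally $\langle \bar g, \partial(\bar f) \rangle$ with the twists as stated. I would do this by applying naturality of the connecting homomorphism to the multiplication-by-$\bar g$ map from the $\cO_C(n)$-twist of the displayed sequence to its $\cO_C(2s-4)$-twist, whose connecting map $H^0(\cO_C(2s-4)) \to H^1(\omega_C)$ composed with the trace is exactly the linear form $\beta(C)$; the rest is formal bookkeeping with graded duals.
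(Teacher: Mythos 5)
Your proposal is correct and follows essentially the same route as the paper: both identify $N(C)$ as the annihilator of $\beta(C)$ in $\ringC$ (you via the connecting homomorphism and Serre duality, the paper via the dual map $\bar f\mapsto \bar f\beta$ into $\left(H^0_*(\cO_C(2s-4))\right)^*$, which is the same statement), then use $\alpha(C)=\rho^*\beta(C)$ from Proposition \ref{pullback} to reduce membership in $I_{\alpha(C),n}$ to the vanishing of $\beta(C)$ on products, with surjectivity of $\pi_{2s-4-n}$ upgrading the inclusion to an equality. The extra care you take with the module structure of the connecting map and the perfectness of the duality pairing is exactly the "formal bookkeeping" the paper leaves implicit.
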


\begin{proof}
The exact sequence
\begin{equation*}
\xymatrix{
H^0_*  \cN_{C/\bP^3} (-s) \ar[r] & \ringC \ar[r]^<{\qquad 1 \mapsto \beta} &
\left(H^0_*(\cO_C(2s-4)) \right)^*
 }
\end{equation*}
shows $N(C)=\Ann_{\ringC}(\beta)$.

The map $\pi:R \to \ringC$ factors through $\rho: \ringS \to \ringC$.
To simplify notation, write $T=\ringS$ and $e=2s-4$. As $\alpha$ is an element of the $T$-module
$T^*$, the ideal $I_\alpha$, which by definition is the annihilator of $\alpha$ in $R$,
is the inverse image of $\Ann_{T}(\alpha)$ under the surjective map $R \to T$. Hence
what we have to prove is that
$\rho^{-1}\left(N(C)_n\right) \subseteq \Ann_{T}(\alpha)_n$  for every integer $n$,
with equality holding when $\rho_{e-n}$ is surjective. Now
$$
\Ann_T (\alpha=\rho^*(\beta))_n= \left\{
g \in T_n: g\rho^*(\beta)(v)=\beta (\rho(g) \rho(v))=0 \quad \forall v \in T_{e-n}
\right\}
$$
while the inverse image $\rho^{-1}\left(N(C)_n\right) $ of the $n^{th}$
graded piece of the annihilator of $\beta(C)$ in $\ringC$ is equal to
$$
\left\{
g \in T_n: (\rho(g) \beta)(w)=\beta (\rho(g) w)=0 \quad \forall w \in H^0(\cO_C(e-n))
\right\}.
$$
The thesis is now evident.
\end{proof}

\bc
The annihilator $I_{\alpha(C)}$ of $\alpha(C)$
contains both the homogeneous ideal of $C$ and the Jacobian ideal of the surface $S$.
\ec

To exemplify the scope of this construction, we remark that it immediately yields the following well known corollary (originally due to Griffiths and Harris, see \cite{EGPS}
for more details).
\bc
Suppose $S$ is a smooth surface in $\bP^3$ and $C$ is an effective divisor on $S$. Then
$C$ is a complete intersection of $S$ and another surface if and only if the sequence \ref{Normal} of normal bundles splits.
\ec
\begin{proof}
If $C$ is a complete intersection of $S$ and another surface, it is clear that the sequence splits. Conversely,if the sequence splits, then $\beta(C)=0$. Therefore $\alpha(C)=0$, and the thesis follows from Proposition \ref{vanishing}.
\end{proof}



\section{Reconstruction of the ideal}\label{3}

Motivated by \cite{MovasatiSertoz}, we want to compare $I_C$ and $I_{\alpha(C)}$.
The following proposition gives rather sharp sufficient conditions for the curve $C$ to be reconstructed  at level $p$ by
 $I_{\alpha(C)}$.

\begin{prop}\label{condizioni}
Let $S$ be a smooth surface of degree $s$ in $\bP^3$, and let $C$ be an effective Cartier divisor on $S$.
Assume that the homogeneous ideal $I_C$ is generated by its forms of degree $\leq p$ and that the following vanishing conditions are satisfied
\begin{enumerate}
    \item $h^1( \sI_C(2s-4-p))=0$
    \item $h^0(\cN_{C/\bP^3}(p-s))=0$
\end{enumerate}
then $I_{\alpha(C),p}= I_{C,p}$, therefore $C$ is reconstructed  at level $p$ by
 $I_{\alpha(C)}$.
\end{prop}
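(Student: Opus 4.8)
The plan is to prove the statement in two stages: first establish the equality of graded pieces $I_{\alpha(C),p}=I_{C,p}$ by feeding the two vanishing hypotheses into Lemma \ref{propannihilators}, and then upgrade this to the reconstruction assertion using the hypothesis that $I_C$ is generated in degrees $\leq p$.

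For the first stage I would begin with the observation that the cokernel of the natural map $\pi_n:R_n\to H^0(\cO_C(n))$ is $H^1(\sI_C(n))$; this is immediate from the long exact cohomology sequence of $0\to\sI_C\to\cO_{\bP^3}\to\cO_C\to 0$ twisted by $\cO_{\bP^3}(n)$ together with $H^1(\cO_{\bP^3}(n))=0$. Consequently condition (1) is exactly the assertion that $\pi_{2s-4-p}$ is surjective, so Lemma \ref{propannihilators} applies in degree $n=p$ and gives $I_{\alpha(C),p}=\pi^{-1}\bigl(N(C)_p\bigr)$. Next I would identify $N(C)_p$: by the definition of $N(C)$ it is the image of $H^0\bigl(\cN_{C/\bP^3}(p-s)\bigr)\to H^0(\cO_C(p))$ coming from the normal bundle sequence (\ref{Normal}) twisted by $\cO_C(-s)$. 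Hence condition (2), namely $h^0(\cN_{C/\bP^3}(p-s))=0$, forces $N(C)_p=0$, and therefore $I_{\alpha(C),p}=\pi^{-1}(0)=\ker\pi_p=H^0(\sI_C(p))=I_{C,p}$, the last equality holding because $I_C=H^0_*(\sI_C)$.

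For the second stage I would first upgrade the equality in degree $p$ to all degrees $n\leq p$. One inclusion, $I_{C,n}\subseteq I_{\alpha(C),n}$, is the general containment $I_C\subseteq I_{\alpha(C)}$ recalled above. For the reverse, if $f\in I_{\alpha(C),n}$ then $R_{p-n}\cdot f\subseteq I_{\alpha(C),p}=I_{C,p}\subseteq I_C$; since $I_C$ is saturated this forces $f\in I_C$, whence $f\in I_{C,n}$. Thus the forms of degree $\leq p$ in $I_{\alpha(C)}$ coincide with those in $I_C$, and since by hypothesis $I_C$ is generated by its forms of degree $\leq p$, it is generated by $I_{\alpha(C),\leq p}$ — which is precisely the definition of $C$ being reconstructed at level $p$.

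I do not expect a serious obstacle: once Lemma \ref{propannihilators} and the constructions of \cite{EP} are in hand, the argument is essentially a matter of playing the two exact sequences off against the two hypotheses. The one point that needs a small independent argument — and the only place where the hypothesis that $I_C$ is generated in degrees $\leq p$ actually enters — is the bootstrap at the beginning of the second stage, showing that equality of the degree-$p$ pieces propagates downward, so that one obtains the full generation statement rather than merely a scheme-theoretic reconstruction.
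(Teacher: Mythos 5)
Your proof is correct and follows essentially the same route as the paper: condition (1) makes $\pi_{2s-4-p}$ surjective so that Lemma \ref{propannihilators} applies with equality in degree $p$, while condition (2) kills $N(C)_p$, giving $I_{\alpha(C),p}=\ker\pi_p=I_{C,p}$. Your second stage --- propagating the equality down to all degrees $n\leq p$ via $R_{p-n}\cdot f\subseteq I_{C,p}$ and the saturatedness of $I_C$ --- is a point the paper leaves implicit but which is genuinely needed for the reconstruction statement, and you supply it correctly.
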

\begin{proof}
Since $h^0\hspace{-3pt}\left(\cN_{C/\bP^3}(p-s)\right)=0$, the annihilator of $\beta(C)$ in degree $p$ vanishes.
Since $\pi_{2s-4-p}: R_{2s-4-p} \to H^0 \cO_C(2s-4-p))$  is surjective, by Lemma \ref{propannihilators}
$$I_{\alpha(C),p}= \pi_p^{-1} (\Ann (\beta_C)_p)= I_{C,p}.$$
\end{proof}

We can now answer a question raised in \cite[Section
2.3.1]{MovasatiSertoz} about twisted cubics contained in quartic surfaces:
if $C$ is a twisted cubic contained
in a smooth quartic
surface $S \subset \bP^3$,  then $C$ is cut out by quadrics in $I_{\alpha(C)}$.
More generally:

\begin{cor} \label{rationalcurves}
Suppose $C \subset \bP^3$ is a general rational curve of degree $d \geq 3$
and let $n_0$ be the round up of $\sqrt{6d-2}-3$, that is, the smallest positive integer $n$ such
that $\binom{n+3}{3}-nd -1 \geq 0$. If $C$ is contained in a smooth surface $S$ of degree $ s \geq n_0+3$, then $C$ is reconstructed  at level $n_0\!+\!1$ by $I_{\alpha(C,S)}$.
\end{cor}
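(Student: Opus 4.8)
The plan is to apply Proposition \ref{condizioni} with $p = n_0+1$, so I need to check three things: that $I_C$ is generated in degrees $\leq n_0+1$, that $h^1(\sI_C(2s-4-p)) = 0$, and that $h^0(\cN_{C/\bP^3}(p-s)) = 0$. The first is the standard fact that a general rational curve of degree $d$ in $\bP^3$ has homogeneous ideal generated in degree $\leq n_0+1$; this follows from the minimal free resolution of a general rational curve (work of e.g. the Hilbert scheme / maximal rank results), where $n_0$ is precisely chosen as the smallest $n$ with $\binom{n+3}{3} - nd - 1 \geq 0$, i.e. the least degree in which $C$ fails to be contained in "too few" surfaces. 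So the definition of $n_0$ is rigged exactly so that $C$ lies on surfaces of degree $n_0$ but the ideal only becomes "saturated enough" / generated by degree $n_0+1$.

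Next, for condition (1) I would bound $h^1(\sI_C(k))$ for $k = 2s-4-p = 2s-5-n_0$. Since $s \geq n_0+3$, we have $k = 2s-5-n_0 \geq s-2 \geq n_0+1$. A general rational curve of degree $d$ has maximal rank, so $h^1(\sI_C(k)) = 0$ for $k$ large, and more precisely one checks $h^1(\sI_C(k)) = 0$ once $k \geq$ some explicit bound comparable to $n_0$. Here one uses $h^1(\sI_C(k)) = h^1(\cO_C(k))$ for $k$ past the point where $H^0(\cO_{\bP^3}(k)) \to H^0(\cO_C(k))$ is surjective (true for $k \geq n_0+1$ by maximal rank, essentially the same inequality that defines $n_0$), and $h^1(\cO_C(k)) = h^1(\bP^1, \cO(kd)) = 0$ for $k \geq 0$ since $C \cong \bP^1$. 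So condition (1) reduces to the inequality $2s-4-p \geq$ (the maximal-rank threshold), which the hypothesis $s \geq n_0+3$ delivers with room to spare.

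For condition (2), I need $h^0(\cN_{C/\bP^3}(p-s)) = 0$, i.e. $h^0(\cN_{C/\bP^3}(n_0+1-s)) = 0$. Since $s \geq n_0+3$ we have $n_0+1-s \leq -2$. For a smooth rational curve, $\cN_{C/\bP^3}$ is a rank-$2$ bundle on $\bP^1$ of degree $4d-2$ (by the normal bundle sequence, $\deg \cN_{C/\bP^3} = \deg \cT_{\bP^3}|_C - \deg \cT_C = 4d - 2$), so $\cN_{C/\bP^3} \cong \cO(a) \oplus \cO(b)$ with $a+b = 4d-2$ and, for a general rational curve, the splitting is balanced: $a, b \in \{2d-1, 2d-1\}$ or close to it (by the genericity, $|a-b| \leq 1$, a result on general rational space curves). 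Twisting by $n_0+1-s$ and using $s \geq n_0+3$ gives summands $\cO(a + n_0 + 1 - s)$ with $a + n_0+1-s \leq a - 2 \leq 2d - 3 + \lceil \tfrac12\rceil$; this is still positive for $d$ large, so global sections do NOT vanish automatically — I must be more careful. The genuine point is that $n_0$ grows like $\sqrt{6d}$, so $n_0+1-s \leq n_0+1-(n_0+3) = -2$ is a fixed small shift, whereas $a \sim 2d$; thus $h^0$ does not vanish from the degree bound alone.

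The main obstacle, then, is condition (2): the naive degree count fails, so I expect the real argument uses the hypothesis more sharply — presumably one wants $s$ large enough relative to $d$ that the twist $p-s$ is very negative, and indeed revisiting the definition $n_0 = \lceil \sqrt{6d-2} - 3\rceil$ one has $n_0 \sim \sqrt{6d}$, so $n_0+1-s \leq -2$ is \emph{not} enough; I would instead re-examine whether Proposition \ref{condizioni} is being applied at level $p = n_0+1$ with the sharper observation that for a \emph{general} rational curve the generators of $I_C$ in degree $n_0$ together with $h^1$-vanishing already force $I_{\alpha(C),n_0+1} = I_{C,n_0+1}$ via a dimension count: one computes $\dim I_{C,n_0+1}$ from maximal rank, computes $\dim I_{\alpha(C),n_0+1}$ from the Gorenstein duality $A = R/I_{\alpha(C)}$ of socle $2s-4$ (so $\dim I_{\alpha(C),n_0+1} = \binom{n_0+4}{3} - \dim A_{n_0+1}$ and $\dim A_{n_0+1} = \dim A_{2s-5-n_0}$), and checks the two agree using $s \geq n_0+3$. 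I would carry this out: (i) establish the resolution/maximal-rank facts for general rational curves, (ii) deduce generation in degree $\leq n_0+1$ and $h^1(\sI_C(2s-4-n_0-1))=0$, (iii) verify $h^0(\cN_{C/\bP^3}(n_0+1-s))=0$ using the balanced splitting of the normal bundle of a general rational space curve together with the fact that $2s-4-(n_0+1) \geq s-3 \geq n_0$ forces, by Serre duality on $C$, the dual vanishing $h^1(\cN_{C/S}^{\vee}\otimes\omega_C(\cdots))$—tracking carefully that the relevant twist is negative enough—and (iv) invoke Proposition \ref{condizioni}. The balanced-splitting input for general rational curves (Sacchiero's theorem) is the key external ingredient making (2) work.
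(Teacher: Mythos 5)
Your overall strategy is the paper's: apply Proposition \ref{condizioni} with $p=n_0+1$, using the maximal rank property of a general rational curve for the generation of $I_C$ in degrees $\leq n_0+1$ and for $h^1(\sI_C(2s-4-p))=0$, and the balanced normal bundle $\cN_{C/\bP^3}\cong\cO_{\bP^1}(2d-1)\oplus\cO_{\bP^1}(2d-1)$ of a general rational space curve for condition (2). Those parts of your write-up are essentially correct, and your check that $2s-5-n_0\geq n_0$ under $s\geq n_0+3$ is the right use of the hypothesis for condition (1).

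The gap is in your treatment of condition (2), and it stems from a concrete miscalculation: when you twist $\cN_{C/\bP^3}$ by $\cO_{\bP^3}(p-s)$ and restrict to $C\cong\bP^1$, the sheaf $\cO_C(1)$ pulls back to $\cO_{\bP^1}(d)$, not to $\cO_{\bP^1}(1)$, because $C$ has degree $d$. Hence the summands of $\cN_{C/\bP^3}(p-s)$ have $\bP^1$-degree $2d-1+d(p-s)$; since $s\geq n_0+3=p+2$ gives $p-s\leq -2$, this degree is at most $2d-1-2d=-1<0$, so $h^0(\cN_{C/\bP^3}(p-s))=0$ at once. Your computation ``$\cO(a+n_0+1-s)$ with $a+n_0+1-s\leq a-2$'' adds the twist to the $\bP^1$-degree instead of multiplying it by $d$, which is why you wrongly conclude that the degree count fails and that there is a ``main obstacle''; there is none. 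As a consequence, the detour you then propose --- a dimension count on $I_{\alpha(C),n_0+1}$ via the Gorenstein duality of $R/I_{\alpha(C)}$ --- is both unnecessary and never actually carried out, so as written the proposal does not verify hypothesis (2) of Proposition \ref{condizioni} and the proof is incomplete. With the corrected degree computation the argument closes exactly as in the paper.
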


\begin{proof}
By \cite{harthirshratl} a general rational curve is a curve of maximal rank, that is,
$h^0 (\sI_{C}(n))=0$ for $n \leq n_0-1$ and $h^1 (\sI_{C}(n))=0$ for $n \geq n_0$.
Hence $C$ is $n_0+1$ regular in the sense of Castelnuovo-Mumford, and $I_C$ is generated
by its forms of degree $\leq n_0+1$. Furthermore, by \cite{Eisenbud-VdVen1}
the normal bundle of the immersion
$\bP^1 \to C \subset \bP^3$ is isomorphic to $\cO_{\bP^1}(2d\!-\!1) \oplus \cO_{\bP^1}(2d\!-\!1)$. Hence  $h^0(\cN_{C/\bP^3}(-m))=0$ for every $m \leq -2$.
 Thus we can apply Proposition \ref{condizioni} with
$p=n_0\!+\!1$.
\end{proof}

%

\begin{rmk}
If $C$ is a smooth irreducible curve of degree $d$, then
$h^1\left( \sI_C(n)\right)=0$ for every $n \geq d\!-\!3-e$ (see \cite{GPspeciality} and \cite{Han}),
where $e:=e(C)=\max \{n\ |\ h^1\left(\cO (n)\right) >0\}$ is the \textit{index of speciality} of $C$.
\end{rmk}

\begin{cor}
\label{boundons}
Let $S$ be a smooth surface of degree $s$ in $\bP^3$, and let $C$ be an effective Cartier divisor on $S$.
Suppose $\sI_C$ is $r$-regular in the sense of Castelnuovo-Mumford.
If $s \geq 2r+1$, then  $C$ is reconstructed  at level $r$ by
 $I_{\alpha(C)}$.
\end{cor}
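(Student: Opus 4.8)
The plan is to deduce Corollary~\ref{boundons} as the case $p=r$ of Proposition~\ref{condizioni}. First, since $\sI_C$ is $r$-regular the ideal $I_C$ is generated by its forms of degree $\le r$, and as $C$ is a non-empty curve $I_C$ contains a non-zero such form, so $r\ge 1$; hence $s\ge 2r+1$ gives $2s-4-r\ge 3r-2\ge r-1$. Since $\sI_C$ is $r$-regular we have $h^1(\sI_C(n))=0$ for all $n\ge r-1$, and in particular $h^1(\sI_C(2s-4-r))=0$, which is hypothesis~(1) of Proposition~\ref{condizioni}; hypothesis~(2) of that proposition, $h^0(\cN_{C/\bP^3}(r-s))=0$, is then the only thing left to verify.

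For hypothesis~(2) I would argue as follows. Because $I_C$ is generated in degrees $\le r$, the conormal sheaf $\sI_C/\sI_C^2$ is a quotient of $\bigoplus_i\cO_C(-a_i)$ with every $a_i\le r$. Since $C$, being an effective Cartier divisor on the smooth surface $S$, is a local complete intersection of codimension $2$ in $\bP^3$, its conormal sheaf is locally free and $\cN_{C/\bP^3}$ is its $\cO_C$-dual; applying $\sH om_{\cO_C}(-,\cO_C)$ to the surjection above gives an inclusion
\[
\cN_{C/\bP^3}=\sH om_{\cO_C}\!\left(\sI_C/\sI_C^2,\cO_C\right)\hookrightarrow\bigoplus_i\cO_C(a_i),
\]
hence $\cN_{C/\bP^3}(r-s)\hookrightarrow\bigoplus_i\cO_C(a_i+r-s)$. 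The role of the bound $s\ge 2r+1$ is precisely to force each twist $a_i+r-s\le 2r-s\le -1$ to be negative; since $\cO_C$ has no non-zero global section in a negative twist, $h^0(\cN_{C/\bP^3}(r-s))=0$. Proposition~\ref{condizioni} then yields $I_{\alpha(C),r}=I_{C,r}$, so $C$ is reconstructed at level $r$ by $I_{\alpha(C)}$.

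The only step requiring care is the vanishing $H^0(\cO_C(j))=0$ for $j<0$: it is immediate when $C$ is reduced (pull back to the normalization, where each component contributes $H^0$ of a negative line bundle on a smooth curve), and I expect this elementary bookkeeping to be the sole, and minor, obstacle — the conceptual content of the corollary being entirely contained in Proposition~\ref{condizioni}, with the inequality $s\ge 2r+1$ only serving to make the twist $2r-s$ strictly negative.
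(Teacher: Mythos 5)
Your reduction to Proposition \ref{condizioni} with $p=r$, including the check $2s-4-r\ge 3r-2\ge r-1$ and the observation that $r\ge 1$, is exactly what the paper does for hypothesis~(1). For hypothesis~(2) you take a genuinely different route. The paper invokes liaison: by Peskine--Szpiro it links $C$ inside a complete intersection $X=S_1\cap S_2$ of two surfaces of degree $r$ whose residual $D$ shares no component with $C$, and deduces $\cN_{C/\bP^3}\hookrightarrow (\cN_X)_{|C}=\cO_C(r)\oplus\cO_C(r)$. You instead dualize a presentation $\bigoplus_i\cO_C(-a_i)\twoheadrightarrow \sI_C/\sI_C^2$ coming from generators of $I_C$ in degrees $a_i\le r$, getting $\cN_{C/\bP^3}\hookrightarrow\bigoplus_i\cO_C(a_i)$. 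Your argument is more self-contained (no liaison input, no need to choose two surfaces meeting properly with controlled residual) and is marginally sharper since some $a_i$ may be $<r$; both arguments then conclude from $h^0(\cO_C(j))=0$ for $j\le 2r-s\le -1$.

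On the point you flag: your caution is well placed, because the vanishing $H^0(\cO_C(j))=0$ for $j<0$ genuinely fails for non-reduced effective divisors. For instance, if $C=2C_0$ with $C_0$ a line on $S$, the filtration $0\to\cO_S(-C_0)_{|C_0}\to\cO_C\to\cO_{C_0}\to 0$ has sub-sheaf $\cO_{\bP^1}(s-2)$, so $h^0(\cO_C(j))\ne 0$ already for $j\ge 2-s$, and in particular $h^0(\cO_C(2r-s))\ne 0$ whenever $r\ge 1$. So this step is not mere bookkeeping. However, you should note that the paper's own proof rests on exactly the same assertion (``$h^0\cN_C(m)=0$ for $m\le -r-1$'' is deduced from the inclusion into $\cO_C(r)^{\oplus 2}$ with no further comment), so your argument is on precisely the same footing as the published one: both are complete when $C$ is reduced, and both would need an additional argument (showing the image of $\cN_C(r-s)$ misses the nilpotent sections, or restricting the statement) in the non-reduced case.
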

\begin{proof}
Since $\sI_C$ is $r$-regular, the ideal $I_C$ is generated by its forms of degree $\leq r$ and $H^1 (\sI_C(n))=0$ for every $n \geq r-1$.
As $s \geq 2r+1$ and $r \geq 1$, the first condition $h^1\sI_C(2s-4-r)=0$  in Proposition \ref{condizioni} is satisfied for $p=r$.

 We are left to check that $h^0\cN_{C/\bP^3}(r-s)=0$.

By \cite[Prop 4.1]{PeskineSzpiro}, there are two surfaces $S_1$ and $S_2$ of degree $r$ meeting properly
in a complete intersection $$X=S_1 \cap S_2=C \cup D$$
so that $C$ and $D$ have no common component.
Consider the exact sequence
$$0 \to \sI_{X} \to \sI_C \to \sI_{C,X}\to 0.$$
Applying $\Hom(-,\cO_C)$ we get
$$0 \to \Hom(\sI_{X},\cO_C) \to \cN_C \to {\cN_{X}}_{|C}$$
and $\Hom(\sI_{X},\cO_C)=0$ since $C$ and $D$ have no common component. Therefore, there is an inclusion
$$\cN_C \hookrightarrow (\cN_{X})_{|C}=\cO_C(r) \oplus \cO_C(r)$$
hence $h^0\cN_C(m)=0$ for $m \leq -r-1$. In particular
$h^0\cN_C(r-s)=0$ because $s \geq 2r+1$.
\end{proof}

\section{Arithmetically Cohen-Macaulay curves} \label{4}
In this section we explain how Example 1.15.3 in \cite{EP} extends the result about the perfection of
complete intersections to the much larger class of arithmetically Cohen-Macaulay curves (from now on, ACM curves).
Recall that a curve $C \subset \bP^3$ is called ACM if its homogeneous ring $R_C=R/I_C$ is Cohen-Macaulay,
or, equivalently, if $C$ is locally Cohen-Macaulay of pure dimension $1$ and $H^1_* (\sI_C)=0$. A smooth ACM curve
is what classically was referred to as a {\em projectively normal curve}.  We refer the reader to \cite{HS_ACM} for a detailed study of ACM curves on a surface in $\bP^3$.

If $C \subset \bP^3$ is an ACM curve, then $I_C$ has a free graded resolution of the form

\begin{equation}\label{reslACM}
    0 \to E= \bigoplus_{j=0}^r R(-b_j) \stackrel{\phi}{\longrightarrow} F=\bigoplus_{i=0}^{r+1} R(-a_i)  \to I_C \to 0
\end{equation}
and $I_C$ coincides with the ideal generated by the $r \times r$ minors of $\phi$ by the Hilbert-Burch theorem -
cf. \cite[Proposition II.1.1 p. 37]{MDP}.

Applying the functor $\Hom_R( \bullet, R/I_C)$ to (\ref{reslACM}) as in \cite[p. 428]{Ellingsrud} one obtains  a long exact sequence

\begin{equation}\label{NormalBundleACM}
    0 \to H^0_* (\cN_C)
    \longrightarrow
    \bigoplus_{i=0}^{r+1} R_C(a_i)
    \longrightarrow
    \bigoplus_{j=0}^{r} R_C(b_j)
    \longrightarrow
    H^0_* (\omega_C(4))
     \to 0
\end{equation}

The importance of this sequence for our purposes is that it allows to compute the Hilbert function
$n \mapsto h^0 (\cN_{C,\bP^3}(n))$ of $\cN_{C,\bP^3}$ as a function of the Hilbert function $n \mapsto h^0 (\cO_C(n))$ of $C$;
we can then compute the dimension of $\Ann (\beta(C))_n$ and of $I_{\alpha(C),n}$ in terms solely of the Hilbert
function of $C$ and of the degree $s$ of $S$. To justify our assertion, one needs to observe that to compute
$h^0 (\cN_{C,\bP^3}(n))$ out of (\ref{NormalBundleACM})
one does not need to know the numbers $a_i$'s and $b_j$'s, but only for each $n$ the difference
$$
\#\{i: \; a_i=n \, \} - \#\{j: \; b_j=n \, \}
$$
which depends only on the Hilbert function of $C$.

As an application of this argument, we can give for ACM curves a sharp
bound for the smallest integer $n$ such that
$I_{\alpha(C),n}=I_{C,n}$. For this we will not need
the full Hilbert function of $C$, but just its
index of speciality $ e:=e(C)=\max \{n\ |\ h^1\left(\cO
(n)\right)=h^2 \left(\sI_C (n)\right) \ >0\} $
and the minimum degree $s(C)$ of a surface containing $C$:
$ s(C)=\min \{n\ |\ h^0\left(\sI_C (n)\right) \ >0\} $.
For an ACM curve $C$, the ideal $\sI_C$ is $e+3$-regular because $H^1_* (\sI_C)=0$.
In particular, the ideal $I_C$ is generated in degrees $\leq e+3$, and $s(C) \leq e+3$.

\begin{thm}\label{acmcurves}
Let $S$ be a smooth surface of degree $s$ in $\bP^3$.
Let $C \subset S$ be an ACM curve, let $s(C)$ be the minimum degree of a surface containing $C$ and let
$e(C)$ be the index of speciality of $C$.

If $s\geq 2e(C)+8-s(C)$ then $I_{\alpha(C),(e+3)}=I_{C,(e+3)}$. Therefore $C$ is reconstructed  at level $e+3$ by $I_{\alpha(C)}$.
\end{thm}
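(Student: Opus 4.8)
The plan is to apply Proposition~\ref{condizioni} with $p = e+3$, where $e = e(C)$. Since $C$ is ACM one has $H^1_*(\sI_C) = 0$, so the first vanishing hypothesis of that proposition --- $h^1(\sI_C(2s-4-(e+3))) = h^1(\sI_C(2s-7-e)) = 0$ --- holds for every $s$; the same vanishing shows $\sI_C$ is $(e+3)$-regular, hence $I_C$ is generated by its forms of degree $\leq e+3$. Thus the theorem reduces to the second hypothesis of Proposition~\ref{condizioni}, namely
$$h^0\!\left(\cN_{C/\bP^3}(e+3-s)\right) = 0.$$

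To prove this I would use the exact sequence~(\ref{NormalBundleACM}), obtained by applying $\Hom_R(-,R/I_C)$ to the Hilbert--Burch resolution~(\ref{reslACM}) of $I_C$: it identifies $H^0_*(\cN_{C/\bP^3})$ with the kernel of the map $\bigoplus_i R_C(a_i) \to \bigoplus_j R_C(b_j)$ transpose to the presentation matrix $\phi$. Setting $m = e+3-s$ and using that $C$ is ACM (so $(R_C)_n = H^0(\cO_C(n))$, which vanishes for $n<0$), the sequence yields
$$H^0\!\left(\cN_{C/\bP^3}(m)\right) = \ker\!\left( \bigoplus_{i=0}^{r+1} H^0(\cO_C(a_i+m)) \longrightarrow \bigoplus_{j=0}^{r} H^0(\cO_C(b_j+m)) \right).$$
The numerical input is that the generator degrees of $I_C$ satisfy $s(C) \leq a_i \leq e+3$ --- the upper bound because $\sI_C$ is $(e+3)$-regular, the lower bound by the definition of $s(C)$ --- together with the hypothesis $s \geq 2e+8-s(C)$, which is exactly the statement $-m = s-e-3 \geq e+5-s(C)$, whence $a_i+m \leq (e+3)+m \leq s(C)-2$ for every $i$.

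The step I expect to be the main obstacle is to deduce $H^0(\cN_{C/\bP^3}(m)) = 0$ from these bounds. For plane curves ($s(C)=1$) it is immediate: then $a_i+m\leq -1$ and already the source of the displayed map vanishes. In general the source does not vanish --- for a twisted cubic on a quartic one has $a_i=2$, $m=-2$, so each $H^0(\cO_C(a_i+m))=H^0(\cO_C)=\bC$ --- and one has to use the lower bound $a_i\geq s(C)$ together with the structure of the Hilbert--Burch matrix to see that the displayed map is injective in degree $m$. Equivalently, one computes $\dim H^0(\cN_{C/\bP^3}(m))$ from the Hilbert function of $C$ alone --- the differences $\#\{i:a_i=n\}-\#\{j:b_j=n\}$ are determined by it --- through the alternating sum attached to the four-term sequence~(\ref{NormalBundleACM}), and checks that it equals $0$ once $s\geq 2e+8-s(C)$. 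I note that, without the lower bound $a_i\geq s(C)$, the cruder estimate $h^0(\cN_{C/\bP^3}(m))\leq\sum_i h^0(\cO_C(a_i+m))$ would only give the weaker condition $s\geq 2e+7$; the refinement to $s\geq 2e+8-s(C)$ is precisely what a minimal generator of degree $s(C)$ buys. An alternative, in the spirit of Example~1.15.3 of~\cite{EP}, is to reduce to complete intersections --- where $\cN_{C/\bP^3}=\cO_C(a)\oplus\cO_C(b)$ makes the vanishing transparent --- by liaison, since every ACM curve is linked through a chain of complete intersections to a complete intersection. Once $h^0(\cN_{C/\bP^3}(e+3-s))=0$ is established, Proposition~\ref{condizioni} gives $I_{\alpha(C),(e+3)}=I_{C,(e+3)}$ and that $C$ is reconstructed at level $e+3$ by $I_{\alpha(C)}$.
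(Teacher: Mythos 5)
Your proposal follows exactly the paper's route: apply Proposition~\ref{condizioni} with $p=e+3$, use $H^1_*(\sI_C)=0$ both for the first vanishing hypothesis and for the $(e+3)$-regularity that makes $I_C$ generated in degrees $\leq e+3$, and reduce everything to $h^0\!\left(\cN_{C/\bP^3}(e+3-s)\right)=0$, attacked via the sequence~(\ref{NormalBundleACM}). The single step you flag as the remaining obstacle --- that (\ref{NormalBundleACM}) forces $h^0\!\left(\cN_{C/\bP^3}(n)\right)=0$ for all $n\le s(C)-e-5$, equivalent under $s\ge 2e+8-s(C)$ to the needed vanishing --- is precisely where the paper's own proof also stops, asserting it from that sequence without further detail, so your argument matches the paper's in both approach and level of completeness.
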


\begin{proof}
The statement follows from Proposition
\ref{condizioni} with $p=e+3$ provided we can show that
$h^0\left(\cN_{C/\bP^3}(e+3-s)\right)=0$. For this we use
the exact sequence (\ref{NormalBundleACM}), which shows that
the maximum $n$ for which
$h^0\cN_{C,\bP^3}(n)=0$ is $n= s(C)-e(C)-5$.
\end{proof}

\begin{rmk}
A twisted cubic curve $C$ is ACM with invariants $s(C)=2$ and $e(C)=-1$. Hence
from Theorem \ref{acmcurves} it follows once more that, if $C$ is contained in a smooth
quartic surface $S$, then $C$ is cut out by quadrics in $I_{\alpha(C,S)}$.
\end{rmk}

\begin{rmk}
Theorem \ref{acmcurves} improves for ACM curves  the bound of Corollary
\ref{boundons} because, since $r=e+3$, then $2e+8-s(C)=2r+2-s(C)$.
\end{rmk}


%

In \cite[Sec 2.3]{MovasatiSertoz}, motivated by the case of complete intersections,
formulate the notion of a {\em perfect class}:
\begin{defn}\label{perfection}
Let $S$ be a smooth surface of degree $s$ in $\bP^3$.
A class $\alpha \in H^1(\Omega_S)/\bC \eta_H \subseteq H^{0} (\cO_S(2s-4))^*$
is \emph{perfect at level $m$} if there exist effective divisors $D_1,\ldots,D_q$
in $S$ such that $I_{\alpha(D_i)}=I_\alpha$ for every
$i=1,\ldots, q$ and
$$
I_{\alpha,j} = \sum_{i=1}^q I_{{D_i},j}+J_{S,j} \quad \mbox{for every $j \leq m$.}
$$
We say the class is \emph{perfect} if $I_\alpha =  \displaystyle \sum_{i=1}^q I_{D_i}+J_S $.
We make the convention that the zero class is perfect - geometrically, this amounts to consider the empty set as a (empty) curve, and is consistent with regarding the zero divisor as an effective divisor.
\end{defn}

\begin{exa}
If $C \subset S$ is the complete intersection of two surfaces meeting properly, then $\alpha(C)$ is perfect
(see \cite[Ex 2.11]{MovasatiSertoz}, \cite[Ex 1.15.2]{EP}, \cite[Prop. 2.14]{Dan} ). If one does not agree that the
zero class is perfect, then one needs to add the condition that $C$ is cut out by two surfaces of degrees $<s= \deg (S)$.
\end{exa}

We now wish to generalize the previous example to the class of ACM curves showing that, if $C$ is ACM, then the class
$\alpha(C)$ is perfect. For this we need to recall more facts from \cite{EP}.
 Suppose the ACM curve $C$ is contained in a smooth surface $S$ of degree $s$ and equation $f=0$. Then
the polynomial $f$ can be written in the form
$$
f= \sum_{i=1}^{r+1} g_i h_i
$$
where the $h_i$'s are the images of the generators of the free module $F$ in the resolution (\ref{reslACM}) of $I_C$,
Since the $h_i$'s are the signed $r \times r$ minors of $\phi$, then polynomial
$f$ is the determinant of the morphism $\psi: E \oplus R(-s) \to F$ obtained adding the
column $[g_1, \ldots, g_{r+1}]^T$ to the matrix of $\phi$: in other words, $\psi$
coincides with $\phi$ on $E$, and sends $1 \in R(-s)$ to $\ds \sum_{i=1}^{p+1} g_i e_i$, where the $e_i$'s
are the generators of $F$. We thus obtain a resolution of $I_C/I_S$:
\begin{equation}\label{resIdealCS}
    0 \to E \oplus R(-s) \stackrel{\psi}{\longrightarrow} F \to I_C/I_S \to 0
\end{equation}
Since $S$ is smooth, the curve $C$ is Cartier on $S$ so that $I_C/I_S$ can locally be generated by one element.
It follows that the ideal $I_r (\psi)$ generated by the $ r \times r$ minors of $\psi$ is irrelevant, that is, its
radical is the irrelevant maximal ideal $(x,y,z,w)$ of the polynomial ring $R$.

\begin{prop}\cite[Prop. 1.16]{EP} \label{prop116EP}
Let $C\subset \bP^3$ be an ACM curve contained in the smooth surface $S$. Suppose $I_C$ has the resolution
(\ref{reslACM}). Then
\begin{enumerate}
  \item  if $\psi$ is as in exact sequence (\ref{resIdealCS}) the presentation of $I_{C,S}$, then
  $$I_{\alpha(C)}= I_r (\psi)$$
  is the ideal generated by the $r \times r$ minors of $\psi$;
  \item the $n^{th}$-graded piece $\Ann(\alpha(C))_n$ of the annihilator of $\alpha(C)$ in $\ringS$ is the
  image of the natural map
  \begin{equation*}
    \bigoplus_{m \in \bZ} H^0\cO_S(C+(n+m)H) \otimes H^0 \cO_S(-C-mH) \longrightarrow H^0 (\cO_S(n))
\end{equation*}
\end{enumerate}
\end{prop}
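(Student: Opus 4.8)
The plan is to reduce both parts to a single statement about the module $M:=I_C/I_S$ --- that $\Ann_T(\alpha(C))$ equals the \emph{trace ideal} of $M$ --- and to extract that from the matrix factorization supplied by $\psi$. First the setup. Since $C$ is ACM, $H^1_*(\sI_C)=0$, and the sequence $0\to\cO_{\bP^3}(-s)\to\sI_C\to\sI_{C/S}\to0$ forces $H^1_*(\sI_{C/S})=0$; hence $M=I_C/I_S\cong H^0_*(\cO_S(-C))$ is a rank-one maximal Cohen--Macaulay $T$-module whose $R$-presentation is the \emph{square} matrix $\psi$ of (\ref{resIdealCS}), with $\det\psi=f$ (up to a nonzero scalar). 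Because $\psi\cdot\operatorname{adj}\psi=\operatorname{adj}\psi\cdot\psi=f\cdot\mathrm{Id}$, the pair $(\psi,\operatorname{adj}\psi)$ is a matrix factorization of $f$; reducing modulo $f$ yields a $2$-periodic $T$-free resolution of $M$, and dualizing it shows $M^{\vee}:=\Hom_T(M,T)=\ker(\bar\psi^{T})$ is the $T$-span of the rows of $\overline{\operatorname{adj}\psi}$.

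From this the algebraic skeleton follows at once. The trace ideal $\operatorname{tr}_T(M)$ --- the image of the evaluation $M^{\vee}\otimes_T M\to T$ --- is generated by the coordinates of the elements of $M^{\vee}$, hence, by the previous paragraph, by the entries of $\overline{\operatorname{adj}\psi}$; thus $\operatorname{tr}_T(M)=\overline{I_r(\psi)}$, and since $f=\det\psi\in I_r(\psi)$, pulling back along $\pi_S\colon R\to T$ gives $I_r(\psi)=\pi_S^{-1}(\operatorname{tr}_T(M))$. On the other hand, $M$ being reflexive over the Gorenstein ring $T$, the section module of $\cO_S(-C)$ is $M$ and that of $\cO_S(C)$ is $M^{\vee}$, so the multiplication map of part (2) is precisely the degree-$n$ component of the evaluation $M^{\vee}\otimes_T M\to T$, with image $\operatorname{tr}_T(M)_n$. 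Therefore part (2) is equivalent to the identity $\Ann_T(\alpha(C))=\operatorname{tr}_T(M)$, and --- since $I_{\alpha(C)}$ is the preimage of $\Ann_T(\alpha(C))$ in $R$ and $I_S=(f)\subseteq I_r(\psi)$ --- part (1) is the same identity.

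Both parts thus come down to $\Ann_T(\alpha(C))=\operatorname{tr}_T(M)$, which I would prove by two inclusions. For $\operatorname{tr}_T(M)\subseteq\Ann_T(\alpha(C))$ I would use linkage: a nonzero $\tau\in H^0(\cO_S(-C-mH))$ comes from a form $G\in I_{C,-m}$ not divisible by $f$, whose zero-divisor $Z$ on $S$ satisfies $C+Z=\operatorname{div}(G)|_S$, so $Z$ is directly linked to $C$ by $\{f=G=0\}$; by Proposition \ref{vanishing} (since $C+Z+mH\sim0$), $I_{\alpha(Z)}=I_{\alpha(C)}$, whence $I_Z\subseteq I_{\alpha(C)}$, and $Z$ being again ACM, $H^0(\sI_{Z/S}(n))=(I_Z/I_S)_n$. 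As multiplication by $\tau$ maps $H^0(\cO_S(C+(n+m)H))$ into $H^0(\sI_{Z/S}(n))$, summing over $\tau$ and $m$ gives $\operatorname{tr}_T(M)\subseteq\Ann_T(\alpha(C))$, i.e.\ $I_r(\psi)\subseteq I_{\alpha(C)}$. For the reverse inclusion I would invoke Lemma \ref{propannihilators}: for $C$ ACM the map $T_n\to H^0(\cO_C(n))$ is onto in every degree (because $H^1_*(\sI_{C/S})=0$), so the lemma gives $\Ann_T(\alpha(C))=\rho^{-1}(N(C))$, with $N(C)=\Ann_{\ringC}(\beta(C))$ the image of $H^0_*\cN_{C/\bP^3}(-s)$ in $\ringC$. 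Dualizing the Hilbert--Burch resolution (\ref{reslACM}) to obtain (\ref{NormalBundleACM}), and writing $f=\sum g_ih_i$ with the $h_i$ the signed maximal minors of $\phi$, one sees that the composite $H^0_*\cN_{C/\bP^3}(-s)\hookrightarrow\bigoplus\ringC(a_i-s)\to\ringC$ induced by $\cN_{C/\bP^3}\to\cN_{S/\bP^3}|_C=\cO_C(s)$ carries a normal vector $(c_i)$ to $\sum g_ic_i$; expanding each $r\times r$ minor of $\psi=[\,\phi\mid(g_i)\,]$ along its last column realizes that minor as one such $\sum g_ic_i$ with $(c_i)\in\ker\overline{\phi^{T}}$, so the image of $I_r(\psi)$ in $\ringC$ lies inside $N(C)$. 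One is then left with the reverse containment $N(C)\subseteq\overline{I_r(\psi)}$: that the annihilator produces nothing beyond these minors.

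This last containment is the step I expect to be the main obstacle, since the normal module $H^0_*\cN_{C/\bP^3}=\ker\overline{\phi^{T}}$ lives over $\ringC$ while the module $M^{\vee}$ controlled by the matrix factorization lives over $T$ (with an extra relation), so a direct comparison is delicate. A clean way to finish, once $I_r(\psi)\subseteq I_{\alpha(C)}$ is in hand, is to note that $R/I_r(\psi)$ is Artinian Gorenstein of codimension $4$ --- by the Gulliksen--Neg\aa{}rd structure theorem for the submaximal minors of a square matrix, applicable because $I_r(\psi)$ is irrelevant --- just as $R/I_{\alpha(C)}$ is; since a surjection of graded Artinian Gorenstein algebras of equal socle degree is an isomorphism, it suffices to check that $R/I_r(\psi)$ has socle degree $2s-4$, a computation with the Gulliksen--Neg\aa{}rd complex in terms of the $a_i$, $b_j$ and $s$ (equivalently, a Hilbert-function count using (\ref{NormalBundleACM}) and Serre duality). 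Apart from this, the remaining care is the bookkeeping of graded twists --- by $H$, by $s$, and by the $a$-invariants $s-4$ of $T$ and $-4$ of $R$ --- and checking that the reductions ``mod $f$'' of the $R$-level maps genuinely realize the intrinsic $T$-module maps, namely the evaluation pairing and the connecting homomorphism of (\ref{Normal}) defining $\beta(C)$.
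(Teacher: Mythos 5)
The paper gives no proof of this proposition: it is quoted from \cite{EP}, and the remark immediately following it states that the equality $I_{\alpha(C)}=I_r(\psi)$ ``is a non trivial fact that is not given a full proof in \cite{EP}'', deferring to \cite{KM} for a complete argument. Your attempt is therefore necessarily a different route, and its skeleton is sound. The reduction of both parts to the single identity $\Ann_T(\alpha(C))=\operatorname{tr}_T(M)$ for $M=I_C/I_S$ is correct: $H^1_*(\sI_{C/S})=0$ makes $M$ maximal Cohen--Macaulay over $T$, the pair $(\psi,\operatorname{adj}\psi)$ is a matrix factorization of $f$, and the trace ideal of $M$ is generated by the entries of $\operatorname{adj}\psi$, i.e.\ by the submaximal minors of the square matrix $\psi$ (which, given the index ranges in (\ref{reslACM}), is what the paper's ``$r\times r$ minors'' must mean). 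Your inclusion $\operatorname{tr}_T(M)\subseteq\Ann_T(\alpha(C))$ via liaison and Proposition \ref{vanishing} is exactly the mechanism the paper later uses to prove Theorem \ref{acmperfette}, and there is no circularity in using it here. The real content is the reverse inclusion, which \cite{EP} leaves open; your Gorenstein endgame is a legitimate substitute for the argument of \cite{KM}: since $I_r(\psi)$ is irrelevant it has grade $4$, Gulliksen--Neg{\aa}rd makes $R/I_r(\psi)$ Artinian Gorenstein, and a graded surjection of Artinian Gorenstein algebras with equal socle degree is an isomorphism because a nonzero kernel would contain the one-dimensional socle.

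The one step you must not leave as a deferral is the socle-degree computation, since without it your argument proves only the (known, ``easy'') inclusion $I_r(\psi)\subseteq I_{\alpha(C)}$. It does check out: $\det\psi=f$ forces $\sum b_j+s-\sum a_i=s$, hence $\sum a_i=\sum b_j$, so the determinant line of $\psi$ is $R(-s)$ and the last free module of the Gulliksen--Neg{\aa}rd resolution, its tensor square, is $R(-2s)$; the socle degree of $R/I_r(\psi)$ is therefore $2s-4$, matching that of $R/I_{\alpha(C)}$ as recorded after Definition \ref{maindef}. With that computation written out (and the degenerate case $\alpha(C)=0$ noted), your proof is complete and self-contained, which is more than the paper offers for this statement.
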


\begin{rmk}
Note that $\Ann(\alpha(C))= I_{\alpha(C)}/I_S$. The  equality
$I_{\alpha(C)}= I_r (\psi)$ is a non trivial fact that is not given a full proof in \cite{EP}; a complete proof can be found in \cite[Proposition 4.3 and p. 382]{KM}.
\end{rmk}

We can now prove that the class $\alpha(C)$ of an ACM curve in a smooth surface  $S$ is perfect:

\begin{thm}
\label{acmperfette}
Let $C \subset S$ be an ACM curve and let $S$ be a smooth surface. Then the class $\alpha(C)$ of $C$ in $S$ is perfect.
\end{thm}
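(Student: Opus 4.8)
The plan is to read a finite list of curves linked to $C$ directly off the description of $\Ann(\alpha(C))$ in Proposition \ref{prop116EP}(2). If $C$ is a complete intersection of $S$ with another surface then $\alpha(C)=0$ and the statement is the Example above (or the stated convention on the zero class), so assume $\alpha(C)\neq 0$. By Euler's identity $f$ is a $\bC$-linear combination of its partial derivatives, so $I_S=(f)\subseteq J_S$; hence it suffices to prove $I_{\alpha(C)}=\sum_i I_{D_i}+J_S$, and we may work modulo $I_S$ in $T=R/I_S$, where $\Ann(\alpha(C))=I_{\alpha(C)}/I_S$. By Proposition \ref{prop116EP}(2), each graded piece $\Ann(\alpha(C))_n$ is the $\bC$-span of the products $a\,b$ with $a\in H^0(\cO_S(C+(n+m)H))$ and $b\in H^0(\cO_S(-C-mH))$, $m\in\bZ$. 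On such a product with $a,b\neq 0$, set $l=-m$; then $l\geq s(C)\geq 1$ and $b$ is a nonzero section of $\cO_S(-C-mH)=\cO_S(lH-C)\cong\sI_{C,S}(l)$, which lifts to a form $G\in I_{C,l}$ since $H^1(\sI_S(l))=0$. As divisors on $S$ one has $S\cap G=\operatorname{div}(G|_S)=C+D_b$, where $D_b=\operatorname{div}(b)$ is the effective divisor of class $lH-C$ residual to $C$ in the complete intersection $S\cap G$ (a genuine curve, not the empty divisor, because $C\not\sim lH$).

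\paragraph{}
Next I would feed each product into the correct ideal. The divisor of $ab$ on $S$ is $\operatorname{div}(a)+D_b\geq D_b$, so $ab\in H^0(\sI_{D_b,S}(n))$; then the exact sequence $0\to\sI_S(n)\to\sI_{D_b}(n)\to\sI_{D_b,S}(n)\to 0$ together with $H^1(\sI_S(n))=0$ shows that any lift $\widetilde{ab}\in R_n$ lies in $I_{D_b,n}+I_{S,n}\subseteq I_{D_b,n}+J_{S,n}$. Moreover $D_b$ is an effective Cartier divisor on the smooth surface $S$, hence locally Cohen--Macaulay of pure dimension one, and it is directly linked to the ACM curve $C$ by the complete intersection $S\cap G$; so by liaison theory \cite{PeskineSzpiro} one has $H^1_*(\sI_{D_b})=0$, i.e. $D_b$ is ACM. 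Finally $C+D_b\sim lH$ is the relation $1\cdot C+1\cdot D_b+(-l)H\sim 0$ with $1,1$ nonzero and coprime, so Proposition \ref{vanishing} yields $I_{\alpha(D_b)}=I_{\alpha(C)}$; in particular $I_{D_b}\subseteq I_{\alpha(D_b)}=I_{\alpha(C)}$.

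\paragraph{}
To conclude I would invoke Noetherianity. Write $I_{\alpha(C)}=(\theta_1,\dots,\theta_\rho)$ with the $\theta_\mu$ homogeneous, and express each class $\bar\theta_\mu$ in $\Ann(\alpha(C))$ as a finite linear combination of products $a_{\mu k}b_{\mu k}$ as above; this produces finitely many curves $D_{\mu k}$, each ACM, linked to $C$, and with $I_{\alpha(D_{\mu k})}=I_{\alpha(C)}$, such that $\theta_\mu\in\sum_k I_{D_{\mu k}}+J_S$ by the previous step. Then $\sum_{\mu,k}I_{D_{\mu k}}+J_S$ is an ideal containing all of the $\theta_\mu$, hence contains $I_{\alpha(C)}$; and the reverse inclusion holds because $I_{D_{\mu k}}\subseteq I_{\alpha(D_{\mu k})}=I_{\alpha(C)}$ and $J_S\subseteq I_{\alpha(C)}$. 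Therefore $I_{\alpha(C)}=\sum_{\mu,k}I_{D_{\mu k}}+J_S$ with $I_{\alpha(D_{\mu k})}=I_{\alpha(C)}$ for every $\mu,k$, which is exactly the statement that $\alpha(C)$ is perfect.

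\paragraph{}
The step I expect to demand the most care is the second one: one must verify that $D_b$ carries precisely the scheme structure of the divisor cut out by the section $b$ --- so that $\widetilde{ab}$ really lies in $I_{D_b}+I_S$, and not merely vanishes on $D_b$ as a set --- and one must allow $b$ to be special, in which case $D_b$ can share a component with $C$ and one appeals to algebraic rather than geometric liaison. Once this bookkeeping is pinned down, the rest is formal.
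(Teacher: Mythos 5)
Your proposal is correct and follows essentially the same route as the paper: both read $\Ann(\alpha(C))_n$ off Proposition \ref{prop116EP}(2) as a span of products $ab$, absorb each product into the ideal of the effective divisor $D_b\in|lH-C|$ cut out by $b$, invoke Proposition \ref{vanishing} to get $I_{\alpha(D_b)}=I_{\alpha(C)}$, and use finite generation of $\Ann(\alpha(C))$ to reduce to finitely many such curves. Your write-up is somewhat more careful on points the paper leaves implicit (lifting from $H^0(\sI_{D_b,S}(n))$ to $R_n$ via $H^1(\sI_S(n))=0$, and $I_S\subseteq J_S$ by Euler's identity); the liaison/ACM-ness remark about $D_b$ is not needed for the statement.
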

\begin{proof}
Fix an integer $n$.
By Proposition \ref{prop116EP} $\Ann(\alpha(C))_n$ is the image of the natural map
  \begin{equation*}
    \bigoplus_{m \in \bZ} H^0\cO_S(C+(n+m)H) \otimes H^0 \cO_S(-C-mH) \longrightarrow H^0 (\cO_S(n)).
\end{equation*}
Note the sum on the left hand side is finite, and consists of those $m$ for which
the linear systems $|C+(m+n)H|$ and $|-C-mH|$ are both non-empty. For such an $m$
we pick a basis $g_1, \ldots, g_{r_m}$ of $ H^0 \cO_S(-C-mH)$ and corresponding
effective divisors $D_k=(g_k)_0 \in |-C-mH|$. The image of
$$H^0\cO_S(C+(n+m_k)H) \otimes  g_k $$ in $H^0 (\cO_S(n))$ is $H^0\sI_{D_k/S}(n)$.
(if $C \sim tH$ is a complete
intersection of $S$ and another surface, taking $m_k=-t$ and $n=0$ we get
$D_k$ the empty curve, and in this case $\alpha(C)=0$ is perfect by our
definition).
Note that $\bQ \alpha(D_k)=\bQ \alpha(C)$ by Proposition \ref{vanishing}.
Now letting $k$ and $m$ vary we see
 that $\alpha(C)$ is perfect at level $n$, for every $n$. Since $\Ann(\alpha(C))$ is finitely generated,
we can let $n$ vary up to the maximum degree of a generator of $\Ann(\alpha(C))$, and
 recover  the whole $\Ann(\alpha(C))$ as the sum of finitely many $I_{D_{k}/S}$ with
 $D_{k} \sim  C+(n+m)H$ for some $m$ and $n$. Therefore $\alpha_C$ in $S$ is perfect.

%
\end{proof}

\section{Example of a non perfect class}\label{sec:rational}

\bt\label{rationalquintic}
Let $C \subset \bP^3$ be a smooth rational curve of degree $4$ contained in a
smooth surface $S$ of degree $s=4$. The class $\alpha(C)$ in $S$ is not
perfect at level $3$.

\et
\begin{proof}
A smooth rational quartic curve $C \subset \bP^3$ is contained in a unique quadric surface $Q$,
and $Q$ is necessarily smooth (all curves on the quadric cone are arithmetically Cohen-Macaulay by
\cite[Chapter V Ex. 2.9 ]{Hartshorne}). We may assume
$C$ is a  divisor of type $(3,1)$ on $Q$. The ideal sheaf of $C$ is $3$-regular, hence
$I_C$ generated by quadrics and cubics.

Suppose $C$ is contained in a smooth quartic surface $S$. Then $Q \cap S$ is the union of
$C$ and an effective divisor $D_0$ of type $(1,3)$ on $Q$. Note that $D_0$ is a curve
of degree $4$ and arithmetic genus $0$; as the divisor class of $D_0$
is different from that of $C$ and $C$ is irreducible, we conclude that $C$ and $D_0$ have no common component.

The curves $C$ and $D_0$ don't move in their linear system on the quartic surface $S$: for $C$ this follows
from $C^2=-2$, and in any case for both $D_0$ and $C_0$ one might argue that
$$
h^0 (\cO_S(D_0))= h^0 (\cO_S(2H-C))= h^0 (\sI_C(2))=1.
$$

Having established the geometric set-up, we proceed to show that  $I_{\alpha(C)}$ contains
too many cubics for $\alpha(C)$ to be perfect at level $3$.
To compute the dimension of $I_{\alpha(C),3}$, we use the fact that $R/I_{\alpha(C)}$ is a Gorenstein ring with socle in degree $2s-4=4$, hence
$$
\dim I_{\alpha(C),3}=\dim  I_{\alpha(C),1}+\dim R_3-\dim R_1= \dim  I_{\alpha(C),1}+16 \geq 16.
$$
This estimate is good enough for us to prove the theorem, but let us show anyway that
$\dim I_{\alpha(C),3}=16$:  as $C$ is a divisor of type $(3,1)$ on $Q$,
$h^1\left( \sI_C(3)\right)=0$  hence by Lemma \ref{propannihilators} $I_{\alpha(C),1}$ is the pull back to $R_1$ of
$N(C)_1$, the image of $H^0\hspace{-3pt}\left(\cN_{C/\bP^3} (-3)\right)$ in $H^0 \left( \cO_{C}(1)\right)$; as the normal bundle of $C$
pulls-back on $\bP^1$ to $ \cO_{\bP^1}(7) \oplus
\cO_{\bP^1}(7)$ by \cite[Proposition 6]{Eisenbud-VdVen1}), we conclude
that $I_{\alpha(C),1}=0$, hence
$
\dim I_{\alpha(C),3} =16.
$ The same argument shows that $I_{\alpha(C),2}=I_{C,2}$ as well.

%

To check whether $I_{\alpha,3}$ is perfect, we need to determine curves $D$ in $S$ with
$I_{\alpha(D)}=  I_{\alpha(C)}$ and  $h^0 \left( \sI_D(3)\right) \geq 1$ so that
$D$ can contribute to $I_{\alpha,3}$.
Thus suppose $D$ is  such a curve. By
Proposition \ref{vanishing}, there exist $m,n,p \in \bZ$, $m,n \neq 0$ and
relatively prime, such that $pH+mC+nD$ is linearly equivalent to zero.
By assumption $3H-D$ is effective; as $C$ is not linearly equivalent to $tH$ for any $t$,
neither is $D$, hence $1 \leq \deg(D)=D \cdot H \leq 11$. Replacing
$D$ with $D'=3H-D$ we can even assume $D\cdot H \leq 6$.

Now consider the matrix
\[
M=
\begin{bmatrix}
H^2 & C \cdot H & H\cdot D \\
C\cdot H & C^2 & C\cdot D \\
H\cdot D & C \cdot D & D^2
\end{bmatrix}=
\begin{bmatrix}
4 & 4 & x \\
4 & -2 & y \\
x & y & z
\end{bmatrix}
\]
As $pH+mC+nD$ is linearly equivalent to zero, the vector $v=[p,m,n]^T$
is in the kernel of $M$. Set $x=H\cdot D$, $y=C\cdot D$ and $z=D^2$. Note that
 $z=D^2=2(p_a(D)-1)=2q$ is even.

The determinant of $M$ must vanish, so
\[x^2+4xy-2y^2-24q=0
\]
From this we deduce first that $x$ and $y$ must be even, and then that $4$ divides $x$.
As $1 \leq x \leq 6$, we must have $x=4$. Thus $D$ is a curve of degree $4$, and either $D=C$ or
$C$ is not a component of $D$, hence
$y=C \cdot D \geq 0$. Assume that $D \neq C$.
Writing $y=2t$ with $t\geq0$, we obtain the equation
$$
t^2-4t+3q-2=0
$$
Looking at the discriminant of this quadratic equation in $t$ we deduce $6-3q$ is a perfect square,
so that $q=2-3a^2$ for an integer $a \geq 0$. Then solving for $t$ and imposing $t \geq 0$
we obtain $t=2+3a$. So $H\cdot D=x=4$, $C \cdot D=y=4+6a$ and $D^2=4-6a^2$.
Then solving the linear system $Mv=0$ for $v=[p,m,n]^T$ we find $m=an$ and $p=-(a+1)n$.
Since $m$ and $n$ are relatively prime and non zero and $a \geq 0$, the only possibility is that $a=1$. Then we can take $m=n=1$ and conclude $C+D \sim 2H$,
so that $C+D$ is the complete intersection of the unique quadric $Q$
containing $C$ with $S$, and  $D=D_0$ is the residual to $C$ in the
complete intersection $Q \cap S$.

We conclude that the only curves $D$ in $S$ that are contained in a cubic surface
and satisfy $I_{\alpha(D)}=  I_{\alpha(C)}$ are
$C$, the residual $D_0$ to $C$ in the
complete intersection $Q \cap S$, and the effective divisors linearly equivalent to either $3H-C$ or $3H-D_0$.
But observe that, if $D' \sim 3H-D_0 \sim C+H$ is effective, then
$$
h^0 \sI_{D'}(3)= h^0 \sI_{C}(2)=1.
$$
Therefore there is a unique cubic containing $D'$, whose equation is contained in the ideal of $D_0$. Similarly,
if $D^{''} \sim 3H-C$ is effective, there is a unique cubic containing $D^{''}$,
whose equation is contained in the ideal of $C$.
Hence any cubic form that belongs to the ideal of a curve $D$ on $S$ satisfying $I_{\alpha(D)}=  I_{\alpha(C)}$
is in the vector space spanned by $I_{C,3}$ and $I_{D_0,3}$.

To show $\alpha(C)$ is not perfect at level $3$ it is now enough to show
that cubics containing either $C$ or $D_0$ plus the cubics in the Jacobian ideal
$J_S$ do not span $I_{\alpha(C),3}$.

To this end, note that cubic surfaces that contain both $C$ and $D_0$ are in
the ideal of the complete intersection of $S$ and $Q$, and so form a vector
space of dimension $4$. By Grassmann's formula
$$
\dim I_{C,3} + \dim I_{D_0,3} = 7+7-4=10
$$
There are four independent cubics in the Jacobian ideal, so
$$ \dim I_{C,3} + \dim I_{D_0,3} + \dim J_{S,3} \leq 14 <16 =
\dim I_{\alpha(C),3}
$$
and this shows that $\alpha(C)$  in $S$ is not perfect at level 3.
\end{proof}

\bibliographystyle{alpha}
\bibliography{ebib}

\end{document}